\documentclass[12pt,reqno]{amsart}

\usepackage{amsmath, amsthm, amscd, amsfonts, amssymb, graphicx, color}
\usepackage[bookmarksnumbered, colorlinks, plainpages]{hyperref}
\hypersetup{colorlinks=true,linkcolor=red, anchorcolor=green, citecolor=cyan, urlcolor=red, filecolor=magenta, pdftoolbar=true}

\textheight 22.5truecm \textwidth 14.5truecm
\setlength{\oddsidemargin}{0.35in}\setlength{\evensidemargin}{0.35in}

\setlength{\topmargin}{-.5cm}

\catcode`\@=11
\def\omathop#1#2#3{\let\temp=#1\def\letter{#2}
 \ifcat#3_ \let\next\@@olim\else\let\next\@olim\fi\next#3}
\def\@olim{\letter\text{-}\!\temp}
\def\@@olim_#1{\mathchoice{
 \setbox0=\hbox{$\displaystyle\letter\text{-}\!\temp\!\text{-}\letter$}
 \setbox2=\hbox{$\displaystyle\temp$}
 \setbox4=\hbox{$\scriptstyle#1$}
 \dimen@=\wd4 \advance\dimen@ by -\wd2 \divide\dimen@ by2
 \def\next{\letter\text{-}\!\temp_{\hbox to 0pt{\hss$\scriptstyle#1$\hss}}
 \hskip\dimen@}
 \ifdim\wd2>\wd4 \def\next{\@olim_{#1}}\fi
 \ifdim\wd4>\wd0 \def\next{\mathop{\llap{$\letter$-}\!\temp}\limits_{#1}}\fi
 \next}
 {\@olim_{#1}}{\@olim_{#1}}{\@olim_{#1}}}

\def\olim{\omathop{\lim}{o}}

\newcommand{\be}{\begin{equation}}
\newcommand{\ee}{\end{equation}}

\def\phi{\varphi}

\newcommand{\bi}{\begin{itemize}}
\newcommand{\ei}{\end{itemize}}
\newcommand{\bn}{\begin{enumerate}}
\newcommand{\en}{\end{enumerate}}
\def\R{\mathbb{R}}
\def\Q{\mathbb{Q}}
\def\N{\mathbb{N}}

\newcommand{\hide}[1]{} 

\newtheorem{thm}{Theorem}[section]

\newtheorem{lemma}[thm]{Lemma}

\theoremstyle{definition}

\newtheorem{example}[thm]{Example}

\numberwithin{equation}{section}

\begin{document}

\title{Atomic operators  in vector lattices }

\author{Ralph Chill}

\address{Institut f\"ur Analysis \\
Fakult\"at f\"ur Mathematik\\
TU Dresden, Zellescher Weg 12--14, 01062 Dresden, Germany}

\author{Marat Pliev}

\address{Southern Mathematical Institute of the Russian Academy of Sciences\\
362027 Vladikavkaz, Russia}

\keywords{Orthogonally additive operator, atomic  operator,
disjointness preserving operator, nonlinear superposition operator,
Boolean homomorphism, vector lattice, order ideal.}

\subjclass[2010]{Primary 46B99; Secondary 47B38.}

\begin{abstract}
In this paper we introduce a new class of operators on vector
lattices. We say that  a linear or nonlinear operator $T$ from a vector lattice $E$ to
a vector lattice $F$ is atomic  if there exists a Boolean
homomorphism $\Phi$ from the Boolean algebra $\mathfrak{B}(E)$ of all
order projections on $E$ to  $\mathfrak{B}(F)$ such that
 $T\pi=\Phi(\pi)T$ for every order projection
$\pi\in\mathfrak{B}(E)$. We show that the set of all atomic
operators defined on a vector lattice $E$ with the principal
projection property and taking values in a Dedekind complete vector
lattice $F$, is a band in the vector lattice of all regular
orthogonally additive operators from $E$ to $F$. We give the  formula
for the order projection onto this band, and we obtain an analytic
representation for atomic operators 
between spaces of measurable  functions.
Finally, we consider the procedure of the extension of an atomic map from a
lateral ideal to the whole space.
\end{abstract}

\thanks{The authors are grateful for support by the Deutsche Forschungsgemeinschaft (grant CH 1285/5-1, Order preserving operators in problems of optimal control and in the theory of partial differential equations) and by the Russian Foundation for Basic Research (grant number 17-51-12064)}

\maketitle


\section{Introduction and preliminaries}

Local operators and, more generally, atomic  operators in classical function
spaces find  numerous applications in control theory, the
theory of dynamical systems and the theory of partial differential equations (see \cite{DrPoSt02,PoSt14,Stp04}). The concept of a local operator was in the context of vector lattices first introduced in \cite{Sh76}. It is an abstract form of the well known property of a nonlinear superposition operator and
can be stated in the following form: the value of the image function
on a certain set depends only  on the values of the preimage
function on the same set. In this article we analyse the notion of an atomic operator in the framework of the theory of vector lattices and orthogonally additive
operators. Today the theory of orthogonally additive operators in
vector lattices  is an active area in functional analysis; see
for instance \cite{AbPli17,AbPli18a,Fl17,Fl19,Ku18,Ku16,OrPlRo16,PlPo16,PlFa17,Pl17,PlWe16,TrVi19}.
Abstract results  of this theory can be applied to the theory of
nonlinear integral operators \cite{MaSL90,SL91}, and there are connections with problems of convex geometry \cite{TrVi18}.

Let us introduce some basic facts concerning vector lattices and orthogonally additive operators. We
assume that the reader is acquainted with the theory of vector lattices
and Boolean algebras. For the standard information we refer to
\cite{AbAl02,AlBu06,Ku00a}. All vector lattices below are assumed  to be
Archimedean.

Let $E$ be a vector lattice. A net $(x_\alpha)_{\alpha \in \Lambda}$
in $E$ \textit{order converges} to an element $x \in E$ (notation
$x_\alpha \stackrel{\rm (o)}{\longrightarrow} x$) if there exists a
net $(u_\alpha)_{\alpha \in \Lambda}$ in $E_{+}$ such that $u_\alpha
\downarrow 0$ and $|x_\alpha - x| \leq u_\alpha$ for all $\alpha\in
\Lambda$ satisfying  $\alpha\geq\alpha_{0}$ for some
$\alpha_{0}\in\Lambda$. Two elements $x$, $y$ of the vector lattice $E$
are \textit{disjoint} (notation $x\bot y$), if
$|x|\wedge|y|=0$. The sum $x+y$ of two disjoint elements $x$ and $y$
is denoted by $x\sqcup y$. The equality
$x=\bigsqcup\limits_{i=1}^nx_i$ means that
$x=\sum\limits_{i=1}^nx_i$ and $x_i \bot x_j$ if $i\neq j$. An
element $y$ of  $E$ is called a \textit{fragment}  of an element $x
\in E$, provided $y \bot (x-y)$. The notation $y \sqsubseteq x$
means that $y$ is a fragment of $x$.  If $E$ is a vector lattice and
$x \in E$ then we denote by $\mathcal{F}_x$ the set of all
fragments of $x$. A positive, linear projection $\pi:E\to E$ is said to
be  an {\it order projection} if $0\leq \pi\leq Id$, where $Id$ is the identity operator on $E$. The set of all order projections on $E$ is denoted by
$\mathfrak{B}(E)$. The set $\mathfrak{B}(E)$ is ordered by $\pi\leq
\rho :\Leftrightarrow \pi\circ\rho=\pi$, and it is a Boolean algebra
with respect to the Boolean operations:
\begin{align*}
\pi\wedge\rho & := \pi\circ\rho ; \\
\pi\vee\rho & := \pi+\rho-\pi\circ\rho ; \\
\overline{\pi} & = Id-\pi.
\end{align*}
An element $x$ of a vector lattice $E$ is called a
\textit{projection element} if the band generated by $x$ is a
projection band, and then we denote by $\pi_{x}$ the order projection onto 
the band generated by $x$. A vector lattice $E$ is said to have the
\textit{principal projection property} if every element of $E$ is a
projection element. For example, every $\sigma$-Dedekind complete
vector lattice has the principal projection property.

A (possibly nonlinear) operator $T:E\rightarrow F$ from a vector lattice $E$ into a real vector space is called \textit{orthogonally additive} if $T(x+y)=T(x)+T(y)$ for every  disjoint elements $x$, $y\in E$. It is clear that if $T$ is orthogonally additive, then $T(0)=0$. The set of all orthogonally additive operators from $E$ into $F$, denoted by $\mathcal{OA} (E,F)$, is a real vector space for the natural linear operations.

An operator $T:E\rightarrow F$ between two vector lattices $E$ and $F$ is said to be:
\begin{itemize}
  \item \textit{positive}, if $Tx \geq 0$ for all $x \in E$;
  \item \textit{order bounded}, if $T$ maps order bounded sets in $E$ to order bounded sets in  $F$.
   \item \textit{laterally-to-order bounded},  if for every
  $x\in E$ the set  $T(\mathcal{F}_{x})$ is  order bounded  in $F$;
\end{itemize}
An orthogonally additive operator $T:E\rightarrow F$ is
\begin{itemize}
  \item \textit{regular},  if  $T=T_{1}-T_{2}$ for two positive, orthogonally additive operators $T_{1}$ and $T_{2}$ from $E$ to $F$.
\end{itemize}

An orthogonally additive, order bounded operator $T:E\rightarrow F$
is called an \textit{abstract Urysohn operator}.  This class of
operators was introduced and studied in 1990 by Maz\'{o}n and
Segura de Le\'{o}n \cite{MaSL90}. We notice that the order boundedness is
a restrictive condition for orthogonally additive operator.
Indeed, every operator $T:\mathbb{R}\to\mathbb{R}$ satisfying $T(0)=0$ is orthogonally additive, but not every operator of this form is order bounded. Consider, for instance, the positive function $T$ defined by
\[
T(x)=\begin{cases} \frac{1}{x^{2}}\,\,\,\text{if $x\neq
0$}\\
0\,\,\,\,\,\text{if $x=0$ }.\\
\end{cases}
\]

The notion of a laterally-to-order bounded operator was introduced in
\cite{PlRa18}. It is obviously weaker than the notion of order bounded operator. An orthogonally additive, laterally-to-order bounded operator
$T:E\rightarrow F$ is also called a \textit{Popov operator}.

We denote by $\mathcal{OA}_{+}(E,F)$ the set of all positive, orthogonally
additive operators from $E$ to $F$ (so that $\mathcal{OA} (E,F)$ becomes an ordered vector space with this cone), by $\mathcal{OA}_{r}(E,F) := \mathcal{OA}_{+}(E,F) - \mathcal{OA}_{+}(E,F)$ the regular, orthogonally additive operators, and by $\mathcal{P}(E,F)$ the laterally-to-order bounded, orthogonally additive operators from $E$ to $F$. Also $\mathcal{OA}_{r}(E,F)$ and $\mathcal{P}(E,F)$ are ordered vector spaces. In general, $\mathcal{OA}_{r}(E,F) \not= \mathcal{P}(E,F)$ (see \cite{PlRa18}), but for a Dedekind complete vector lattice $F$ we have the following strong properties of $\mathcal{OA}_{r}(E,F)$ and $\mathcal{P}(E,F)$.

\begin{thm}[{\cite[Theorem~3.6]{PlRa18}}] \label{thm:PK}
Let $E$ and $F$ be vector lattices, and assume that $F$ is Dedekind complete. Then
$\mathcal{OA}_{r}(E,F) = \mathcal{P}(E,F)$, and $\mathcal{OA}_{r}(E,F)$ is a
Dedekind complete vector lattice. Moreover, for every $S$, $T\in
\mathcal{OA}_{r}(E,F)$ and every $x\in E$,
\begin{enumerate}
\item~$(T\vee S)(x)=\sup\{Ty+Sz:\,x=y\sqcup z\}$;
\item~$(T\wedge S)(x)=\inf\{Ty+Sz:\,x=y\sqcup z\}$;
\item~$(T)^{+}(x)=\sup\{Ty:\,y\sqsubseteq x\}$;
\item~$(T)^{-}(x)=-\inf\{Ty:\,\,\,y\sqsubseteq x\}$;
\item~$|Tx|\leq|T|(x)$.
\end{enumerate}
\end{thm}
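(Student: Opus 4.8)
The plan is to transpose the Riesz--Kantorovich construction for linear operators to the orthogonally additive setting, replacing the vector order by the fragment order $\sqsubseteq$ and ordinary sums by lateral sums $\sqcup$. The entire statement will be deduced from properties of the single candidate operator
\[
T^{+}(x) := \sup\{\, Ty : y \sqsubseteq x \,\}, \qquad x \in E .
\]
Concretely, I would show that for every $T \in \mathcal{P}(E,F)$ this formula defines a positive, orthogonally additive operator that is the least upper bound of $T$ and $0$ in the ordered vector space $\mathcal{OA}_{r}(E,F)$; formulas (1)--(5) and Dedekind completeness then drop out.

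The geometric fact on which everything rests is a decomposition lemma for fragments: for disjoint $x_{1}, x_{2}$,
\[
\mathcal{F}_{x_{1} \sqcup x_{2}} = \{\, y_{1} \sqcup y_{2} : y_{1} \sqsubseteq x_{1},\ y_{2} \sqsubseteq x_{2} \,\},
\]
the assignment $(y_{1}, y_{2}) \mapsto y_{1} \sqcup y_{2}$ being a bijection. I would prove this directly from $|y| \wedge |(x_{1} \sqcup x_{2}) - y| = 0$ and Archimedeanness, and I expect it to be the main obstacle, since without a projection property one cannot simply split $y$ by a band projection. Granting it, orthogonal additivity of $T^{+}$ is immediate: using $T(y_{1} \sqcup y_{2}) = Ty_{1} + Ty_{2}$ and the fact that in a Dedekind complete lattice a supremum over an independent product factors as $\sup\{a + b : a \in A,\ b \in B\} = \sup A + \sup B$, one gets $T^{+}(x_{1} \sqcup x_{2}) = T^{+}(x_{1}) + T^{+}(x_{2})$.

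For the equality $\mathcal{OA}_{r}(E,F) = \mathcal{P}(E,F)$, the inclusion ``$\subseteq$'' follows because a positive orthogonally additive operator is $\sqsubseteq$-monotone (if $u \sqsubseteq v$ then $Tv = Tu + T(v-u) \ge Tu \ge 0$); since the decomposition lemma gives $y^{+} \sqsubseteq x^{+}$ and $-y^{-} \sqsubseteq -x^{-}$ for $y \sqsubseteq x$, one bounds $T(\mathcal{F}_{x})$ inside the order interval $[0, Tx^{+} + T(-x^{-})]$, and a difference of two positives is then laterally-to-order bounded. For ``$\supseteq$'', given $T \in \mathcal{P}(E,F)$ the set $\{Ty : y \sqsubseteq x\}$ is order bounded by hypothesis, so Dedekind completeness of $F$ makes $T^{+}(x)$ well defined; it is positive (take $y = 0$) and orthogonally additive by the previous step. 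Setting $T^{-} := T^{+} - T$ and using $Ty - Tx = -T(x-y)$ as $y$ runs through $\mathcal{F}_{x}$, one finds $T^{-}(x) = -\inf\{Tz : z \sqsubseteq x\} \ge 0$, so $T = T^{+} - T^{-}$ is regular. This simultaneously proves formulas (3) and (4).

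It remains to identify $T^{+}$ with $T \vee 0$: it dominates both $T$ and $0$, and any upper bound $R$ satisfies, for $y \sqsubseteq x$, $R(x) = R(y) + R(x-y) \ge Ty + 0$, whence $R(x) \ge T^{+}(x)$. Formulas (1) and (2) then follow from $T \vee S = S + (T - S)^{+}$ together with $Sx - Sy = S(x-y)$, and (5) holds since $Tx \le T^{+}(x)$ and $-Tx \le T^{-}(x)$ give $|Tx| \le T^{+}(x) + T^{-}(x) = |T|(x)$. Finally, Dedekind completeness is obtained by the standard directed-set argument: for an order bounded, upward directed family $(T_{\alpha})$ each net $(T_{\alpha}(x))$ is increasing in $F$ (because $T_{\alpha} \le T_{\beta}$ forces $T_{\alpha}(x) \le T_{\beta}(x)$) and bounded above, hence has a supremum; since suprema of increasing nets add over lateral sums, the pointwise supremum $x \mapsto \sup_{\alpha} T_{\alpha}(x)$ is orthogonally additive and is the supremum of $(T_{\alpha})$ in $\mathcal{OA}_{r}(E,F)$.
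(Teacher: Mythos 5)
The paper contains no proof of this statement at all: Theorem~\ref{thm:PK} is imported verbatim from \cite[Theorem~3.6]{PlRa18}, so there is no in-paper argument to compare against; judged on its own merits, your proposal is correct and follows essentially the standard route of the cited source, namely the Riesz--Kantorovich construction transposed to the lateral order, pivoting on $T^{+}(x)=\sup\{Ty:\, y\sqsubseteq x\}$ and exploiting the fact that the order on $\mathcal{OA}_{r}(E,F)$ is pointwise (positivity of an orthogonally additive operator means $Tx\geq 0$ for \emph{all} $x$), which is what makes both the least-upper-bound verification and the directed-supremum argument for Dedekind completeness go through. The one step you leave open --- the decomposition $\mathcal{F}_{x_{1}\sqcup x_{2}}=\{y_{1}\sqcup y_{2}:\, y_{1}\sqsubseteq x_{1},\ y_{2}\sqsubseteq x_{2}\}$ --- is indeed true in any vector lattice (no projection property needed) and can be closed either by first showing $y\sqsubseteq x\Rightarrow y^{+}\sqsubseteq x^{+},\ y^{-}\sqsubseteq x^{-}$ and then taking $y_{i}=y\wedge x_{i}$ in the positive case, or directly from the Boolean algebra structure of $(\mathcal{F}_{x},\sqsubseteq)$ recalled in Lemma~\ref{pr:partorder}, by taking Boolean meets of $y$ with $x_{1}$ and $x_{2}$.
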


\section{Basic properties of atomic operators }

In this section we introduce a new subclass of orthogonally additive
operators, namely the class of {\it atomic operators}, and show that under some assumptions on the vector lattices $E$ and $F$ the set
of all atomic operators from $E$ to $F$ subordinate to a Boolean
homomorphism $\Phi:\mathfrak{B}(E)\to\mathfrak{B}(F)$ is a band in
the vector lattice $\mathcal{OA}_{r}(E,F)$. We further obtain a formula
for the order projection onto this band.

Let us first recall the definition of Boolean homomorphisms. 
Let  $\mathfrak{A}$, $\mathfrak{B}$ be Boolean algebras. A map
$\Phi:\mathfrak{A}\to \mathfrak{B}$ is called a \emph{Boolean
homomorphism}, if the following conditions hold:
\begin{enumerate}
  \item[(1)] $\Phi(x\vee y)=\Phi(x)\vee\Phi(y)$ for all $x$, $y\in \mathfrak{A}$.
  \item[(2)] $\Phi(x\wedge y)=\Phi(x)\wedge\Phi(y)$ for all $x$, $y\in \mathfrak{A}$.
  \item[(3)] $\Phi(\overline{x})=\overline{\Phi(x)}$ for all $x\in\mathfrak{A}$.
\end{enumerate}
It is clear that
$\Phi(\mathbf{0}_{\mathfrak{A}})=\mathbf{0}_{\mathfrak{B}}$ and
$\Phi(\mathbf{1}_{\mathfrak{A}})=\mathbf{1}_{\mathfrak{B}}$. If,
moreover,
$\Phi(\bigvee\limits_{\lambda\in\Lambda}x_{\lambda})=\bigvee\limits_{\lambda\in\Lambda}\Phi(x_{\lambda})$
for every family (resp. countable family) $(x_{\lambda})_{\lambda\in\Lambda}$ of  elements
of $\mathfrak{A}$ then $\Phi$ is said to be  an {\it order
continuous} (resp. a {\it sequentially order continuous}) Boolean homomorphism.
 
Let $E$ and $F$ be vector lattices and $\Phi$ be a Boolean homomorphism
from $\mathfrak{B}(E)$ to $\mathfrak{B}(F)$. A map $T:E\to F$ is
said to be an {\it  atomic operator subordinate to $\Phi$}, or briefly {\it
atomic operator}, if $T\pi=\Phi(\pi)T$ for every  order projection
 $\pi\in\mathfrak{B}(E)$. The set of all atomic operators from $E$ to $F$
subordinate to $\Phi$ is denoted by $\Phi(E,F)$.

We remark that the class of atomic operators was first introduced in
\cite{AbPli17}. It is easy to verify that $\Phi (E,F)$ is a vector space. 
Indeed, let $\lambda\in\mathbb{R}$, $T$, $S\in\Phi(E,F)$, $x\in E$ and
$\pi\in\mathfrak{B}(E)$. Then
\begin{align*}
\Phi(\pi)\lambda T(x) & =\lambda\Phi(\pi)T(x)=\lambda T\pi(x) \text{ and} \\
\Phi(\pi)(T+S)(x) & =\Phi(\pi)T(x)+\Phi(\pi)S(x) \\
 & = T\pi(x)+S\pi(x) \\
 & = (T+S)\pi(x) .
\end{align*}

Let us consider some examples of atomic operators.

\begin{example}
Recall that an operator $T:E\to E$ on a vector lattice is said to
be {\it band preserving} if $T(D)\subseteq D$ for every
band $D$ of $E$. By \cite[Theorem 2.37]{AlBu06}, if $E$ is a vector lattice with the principal projection property, then a linear operator $T:E\to E$ is band preserving if
and only if $T$ commutes with every order projection on $E$. In other words, if $E$ is a vector lattice with the principal projection property, then a linear operator $T:E\to E$ is band preserving if and only if it is atomic subordinate to the identity homomorphism  $\Phi:\mathfrak{B}(E)\to\mathfrak{B}(E)$. In particular, if $E$ has the principal projection property, then every linear {\it orthomorphism} $T:E\to E$ (a band preserving, order bounded operator) is atomic with respect to the identity homomorphism.
\end{example}

\begin{example}
Let  $E=l^{p}(\mathbb{Z})$ with $1\leq p\leq\infty$. For every subset $A\subseteq\mathbb{Z}$ one can define an order projection $\pi_A$ which corresponds in fact to the multiplication by the characteristic function $1_A$. This gives a one-to-one correspondence between the Boolean algebra ${\mathcal P} (\mathbb{Z})$ of all subsets of $\mathbb{Z}$ and the Boolean algebra of order projections on $E$. With this identification and for fixed $k\in\mathbb{Z}$, if we define the shift Boolean homomorphism $\Phi_k : {\mathcal P} (\mathbb{Z} ) \to {\mathcal P} (\mathbb{Z})$, $A\mapsto \Phi_k (A) = \{ i+k : i\in A\}$ and the shift operator $T_k : E\to E$, $f\mapsto T_k f$ with $(T_{k}f)(i)=f(i-k)$, then $T_k$ is an atomic operator subordinate to $\Phi_k$.  
\end{example}
 
The following is an example of a nonlinear atomic operator.

\begin{example}\label{Nem}
Let $(B,\Xi,\nu)$ be a $\sigma$-finite  measure space,
$L_{0}(B,\Xi,\nu) = L_{0}(\nu)$ the vector
space of all (equivalence classes of) measurable real valued functions on $B$. A function $N:B\times\mathbb{R}\rightarrow\mathbb{R}$ is called a {\em $\mathfrak{K}$-function} if it satisfies the  conditions:
\begin{enumerate}\label{properties}
 \item[$(C_{0})$] $N(s,0)=0$ for $\nu$-almost all $s\in B$;
 \item[$(C_{1})$] $N(\cdot,r)$ is measurable for all $r\in\mathbb{R}$;
 \item[$(C_{2})$] $N(s,\cdot)$ is continuous on $\mathbb{R}$ for $\nu$-almost all $s\in B$.
\end{enumerate}
If the function satisfies only the conditions $(C_{1})$ and $(C_{2})$, then we call it a \textit{Carath\'{e}odory function}. Given a Caratheodory function $N:B\times\mathbb{R}\rightarrow\mathbb{R}$, one defines
the \textit{superposition operator} $T_{N}:L_{0}(\nu)\to L_{0}(\nu)$ by
\[
T_N f := N(\cdot ,f(\cdot )) \quad (f\in L_{0}(\nu)).
\]
We note that a superposition operator is in the literature also known as {\em Nemytskii operator}. The theory of these  operators is widely represented in the literature (see
\cite{ApZa90}).
\end{example}

\begin{lemma} \label{lem.nemytski}
If $N: B\times\mathbb{R}\rightarrow\mathbb{R}$ is a $\mathfrak{K}$-function, then the superposition operator $T_{N}:L_{0}(\nu)\to L_{0}(\nu)$ is an atomic operator subordinate to the identity homomorphism
$Id:\mathfrak{B}(L_{0}(\nu))\to\mathfrak{B}(L_{0}(\nu))$.
\end{lemma}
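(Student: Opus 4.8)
The plan is to exploit the pointwise (local) nature of the superposition operator together with the concrete description of the order projections on $L_{0}(\nu)$. Since the homomorphism in question is the identity $Id:\mathfrak{B}(L_{0}(\nu))\to\mathfrak{B}(L_{0}(\nu))$, the assertion reduces to showing that $T_{N}$ commutes with every order projection $\pi\in\mathfrak{B}(L_{0}(\nu))$, that is, $T_{N}\pi=\pi T_{N}$.

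First I would recall the standard description of $\mathfrak{B}(L_{0}(\nu))$. Because $(B,\Xi,\nu)$ is $\sigma$-finite, $L_{0}(\nu)$ is Dedekind complete and its bands are in bijection with the measurable sets $A\in\Xi$ modulo null sets; the order projection onto the band determined by $A$ is the multiplication by the indicator function, $\pi_{A}f=1_{A}\cdot f$. Hence every order projection on $L_{0}(\nu)$ has this form, and it suffices to verify $T_{N}\pi_{A}=\pi_{A}T_{N}$ for each $A\in\Xi$.

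Next I would carry out the verification by a pointwise computation valid $\nu$-almost everywhere. Fix $A\in\Xi$ and $f\in L_{0}(\nu)$. For $\nu$-almost every $s\in B$ one has
\[
(T_{N}\pi_{A}f)(s)=N\bigl(s,1_{A}(s)f(s)\bigr)\quad\text{and}\quad(\pi_{A}T_{N}f)(s)=1_{A}(s)\,N\bigl(s,f(s)\bigr).
\]
If $s\in A$ both expressions equal $N(s,f(s))$, while if $s\notin A$ the right-hand side is $0$ and the left-hand side is $N(s,0)$, which equals $0$ for $\nu$-almost all $s$ by condition $(C_{0})$. Thus the two functions coincide as elements of $L_{0}(\nu)$, which yields the desired commutation relation and hence the atomicity of $T_{N}$ subordinate to $Id$.

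The argument is essentially routine once the representation of the order projections is in hand; the only point requiring care — and the only place where the full strength of the $\mathfrak{K}$-function hypothesis enters — is the treatment of the complement of $A$, where $(C_{0})$ is exactly what forces $N(s,0)=0$ and so makes $T_{N}\pi_{A}f$ vanish off $A$. A general Carath\'eodory function failing $(C_{0})$ would break the commutation there, so no genuine analytic obstacle remains beyond invoking $(C_{0})$. One should of course also note that conditions $(C_{1})$ and $(C_{2})$ guarantee that $N(\cdot,f(\cdot))$ is again measurable, so that $T_{N}$ indeed maps $L_{0}(\nu)$ into itself and the operator is well defined.
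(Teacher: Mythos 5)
Your proof is correct and follows essentially the same route as the paper's: identify every order projection on $L_{0}(\nu)$ with multiplication by an indicator $1_{A}$, then verify the commutation $T_{N}\pi_{A}=\pi_{A}T_{N}$ by the pointwise case analysis $s\in A$ versus $s\notin A$, invoking $(C_{0})$ exactly on the complement. The only difference is cosmetic — the paper states the pointwise identity $N(s,r1_{D}(s))=N(s,r)1_{D}(s)$ for an arbitrary real $r$ before substituting $r=f(s)$, whereas you substitute directly — so there is nothing to add.
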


\begin{proof}
Let $\Xi_0=\{D\in\Xi:\,\nu(D)=0\}$ be the set of all $\nu$-null sets. Then $\Xi_{0}$ is an ideal in the Boolean algebra
$\Xi$. We let $\Xi':=\Xi/\Xi_0$ be the factor algebra. It is well known that the Boolean algebra $\mathfrak{B}(L_{0}(\nu))$
of all order projections on $L_{0}(\nu)$ is isomorphic to the
Boolean algebra $\Xi'$. In fact, to every (equivalence class of a) measurable subset $D\in\Xi'$ there corresponds an order projection $\pi_D$ which is in fact the multiplication by the characteristic function $1_D$, and vice versa; see \cite[Section 1.6]{AbAl02}. Now  we show that
\[  
N(s,r1_D(s))=N(s,r)1_D(s) \quad \mbox{for all} \; D\in \Xi'.   
\]
First, for every $s\in D$,
\[  
N(s, r1_D(s)) = N(s, r) = N(s,r)1_D(s).   
\]
Second, for every $s\in B\setminus D$, by condition $(C_{0})$,
\[  
N(s, r1_D(s)) = N(s, 0)= 0 = N(s,r)1_D(s).   
\]
Hence, for every $f\in L_{0}(\nu)$ and $\pi = \pi_D\in\mathfrak{B}(L_{0}(\nu))$,
\begin{gather*}
T\pi f = T(f1_{D})=N(\cdot,f1_{D}(\cdot))= N(\cdot,f(\cdot))1_{D}(\cdot)=\pi Tf ,
\end{gather*}
and the assertion is proved.
\end{proof}

The following lemma shows that on Banach lattices with the principal projection property every atomic operator is regular orthogonally additive.

\begin{lemma}\label{Oradd}
Let $E$ be a vector lattice with the principal projection property,
$F$ be a  vector lattice,  $\Phi$ be a Boolean homomorphism from
$\mathfrak{B}(E)$ to $\mathfrak{B}(F)$ and $T\in\Phi(E,F)$. Then $T$
is orthogonally additive, laterally-to-order bounded (that is, $T\in{\mathcal P} (E,F)$) and disjointness preserving. 
\end{lemma}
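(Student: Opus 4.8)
The plan is to base everything on a single observation about the principal projection property, namely that the fragments of an element $x \in E$ are precisely the elements $\pi x$ with $\pi \in \mathfrak{B}(E)$. First I would verify this correspondence. One inclusion is immediate: for any $\pi \in \mathfrak{B}(E)$ the element $\pi x$ is disjoint from $(Id - \pi)x = x - \pi x$, so $\pi x \sqsubseteq x$. For the converse, given $y \sqsubseteq x$ I would take $\pi := \pi_y$, the order projection onto the band generated by $y$, which exists because $E$ has the principal projection property; since $x - y \perp y$ lies in the kernel of $\pi_y$, one gets $\pi_y x = \pi_y y + \pi_y(x-y) = y$. Throughout I will freely use the defining identity of an atomic operator in the pointwise form $T(\pi x) = \Phi(\pi)Tx$, together with the Boolean-homomorphism identity $\Phi(Id - \pi) = Id - \Phi(\pi)$.

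For orthogonal additivity and disjointness preservation I would treat both at once. Given disjoint $x, y \in E$, set $z = x + y$ and $\pi := \pi_x$. Because $y$ lies in the disjoint complement of the band generated by $x$, we have $\pi y = 0$, so $\pi z = x$ and $(Id - \pi)z = y$. Evaluating the defining identity at $z$ then yields $Tx = \Phi(\pi)Tz$ and $Ty = (Id - \Phi(\pi))Tz$. Adding these gives $Tx + Ty = Tz$, which is orthogonal additivity; and since $\Phi(\pi)$ and $Id - \Phi(\pi)$ are complementary, hence disjoint, order projections on $F$, their ranges are disjoint bands, so that $Tx = \Phi(\pi)Tz$ and $Ty = (Id - \Phi(\pi))Tz$ are disjoint. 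Hence $T$ is disjointness preserving.

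For lateral-to-order boundedness I would fix $x \in E$ and use the fragment correspondence to write
\[
T(\mathcal{F}_x) = \{\, T(\pi x) : \pi \in \mathfrak{B}(E) \,\} = \{\, \Phi(\pi)Tx : \pi \in \mathfrak{B}(E) \,\}.
\]
For every order projection $\rho \in \mathfrak{B}(F)$ one has $0 \le \rho \le Id$, so applying $\rho$ to $-|Tx| \le Tx \le |Tx|$ gives $|\rho(Tx)| \le |Tx|$; thus $T(\mathcal{F}_x) \subseteq [-|Tx|, |Tx|]$ is order bounded, i.e.\ $T \in \mathcal{P}(E,F)$.

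The computations are all short, so the only real content is the fragment correspondence, and this is exactly where the principal projection property of $E$ is needed, to guarantee that $\pi_y$ exists for each fragment $y$. I would therefore isolate and prove that correspondence first; after that, the three claimed properties each follow from one application of $T\pi = \Phi(\pi)T$ together with the elementary identity $\Phi(Id - \pi) = Id - \Phi(\pi)$. It is worth noting that neither Dedekind completeness of $F$ nor order boundedness of $T$ enters the argument, which shows that this lemma rests on genuinely weaker hypotheses than the surrounding results.
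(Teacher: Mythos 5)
Your proposal is correct, and it rests on the same basic toolkit as the paper (principal projections supplied by the principal projection property, the atomic identity $T\pi=\Phi(\pi)T$, and the Boolean homomorphism axioms), but it is organized along a genuinely different route. For orthogonal additivity the paper decomposes $T(x+y)$ using the pair $\pi_x,\pi_y$ and the computation $\Phi(\pi_x\vee\pi_y)=\Phi(\pi_x)\vee\Phi(\pi_y)=\Phi(\pi_x)+\Phi(\pi_y)$, whereas you use a single projection $\pi_x$ together with its complement and the identity $\Phi(Id-\pi)=Id-\Phi(\pi)$; both are valid, and yours needs one fewer principal projection. The more substantive difference is in the lateral-to-order boundedness. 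The paper derives it as a consequence of the two properties already proved: for $y\sqsubseteq x$, disjointness preservation gives $Ty\perp T(x-y)$ and orthogonal additivity gives $Tx=Ty+T(x-y)$, hence $T(\mathcal{F}_x)\subseteq\mathcal{F}_{Tx}$ and so $|Ty|\leq|Tx|$. You instead prove it directly from your fragment correspondence $\mathcal{F}_x=\{\pi x:\pi\in\mathfrak{B}(E)\}$ (which is exactly where the principal projection property enters, as you note) and the elementary bound $|\Phi(\pi)Tx|\leq|Tx|$ coming from $0\leq\Phi(\pi)\leq Id$, independently of disjointness preservation. Your route is more self-contained and isolates a reusable correspondence; the paper's route yields the stronger intermediate fact that $T$ maps fragments of $x$ to fragments of $Tx$, not merely into the order interval $[-|Tx|,|Tx|]$, which is the kind of information exploited later in the paper (e.g., in the lateral-continuity and extension arguments). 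Your closing remark that neither Dedekind completeness of $F$ nor order boundedness of $T$ is needed is accurate and consistent with the lemma's hypotheses.
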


\begin{proof}
Fix $x$, $y\in E$ with $x\perp y$. Then
\begin{align*}
T(x+y) & =T(\pi_{x}+\pi_{y})(x+y) \\
 & = T(\pi_{x}\vee\pi_{y})(x+y) \\
 & =\Phi(\pi_{x}\vee\pi_{y})T(x+y) \\
 & = (\Phi(\pi_{x})\vee\Phi(\pi_{y})) T(x+y) \\
 & = (\Phi(\pi_{x})+\Phi(\pi_{y}))T(x+y) \\
 & = \Phi(\pi_{x})T(x+y)+\Phi(\pi_{y})T(x+y) \\
 & = T\pi_{x}(x+y)+T\pi_{y}(x+y) \\
 & =Tx+Ty.
\end{align*}
Hence, $T$ is orthogonally additive.

We next show that $T$ is disjointness preserving. Let $x$, $y\in E$ be disjoint elements. 
Then the order projections $\pi_{x}$, $\pi_{y}$ are disjoint
elements in the Boolean algebra $\mathfrak{B}(E)$, and hence $\Phi(\pi_{x})$ and
$\Phi(\pi_{y})$ are disjoint elements in the Boolean algebra
$\mathfrak{B}(F)$. Since
\begin{align*}
Tx & = T\pi_{x}x=\Phi(\pi_{x})Tx \text{ and} \\
Ty & = T\pi_{y}y=\Phi(\pi_{y})Ty ,
\end{align*}
then $Tx\perp Ty$.

Now fix $x\in E$ and assume that $y\in\mathcal{F}_{x}$. Then
by definition $y\perp(x-y)$, and therefore, since $T$ is disjointness preserving, $Ty\perp T(x-y)$. Since $T$ is orthogonally additive, this implies $T(\mathcal{F}_{x})\subseteq\mathcal{F}_{Tx}$, and hence $|Ty|\leq |Tx|$. Hence $T$ is a laterally-to-order bounded. 
\end{proof}

It is worth to notice that without any assumption on the vector 
lattices $E$ and $F$ the space $\mathcal{P}(E,F)$ is
not a vector lattice and we say nothing about the order structure of
the space of laterally-to-order bounded orthogonally additive
operators. Nevertheless the next lemma shows that $\Phi (E,F)$ is a vector lattice
if $E$ has the principal projection property. 

\begin{lemma}\label{module}
Let $E$ be a vector lattice with the principal projection property,
$F$ be a  vector lattice, and $\Phi$ be a Boolean homomorphism from
$\mathfrak{B}(E)$ to $\mathfrak{B}(F)$. Then $\Phi(E,F)$ is a vector lattice. 
\end{lemma}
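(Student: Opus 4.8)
The plan is to prove that $\Phi(E,F)$ is a lattice by exhibiting an explicit formula for the supremum of two of its elements. Given $T$, $S\in\Phi(E,F)$, I define $U\colon E\to F$ pointwise by $U(x):=Tx\vee Sx$, which makes sense because $F$ is a vector lattice and hence binary suprema always exist (no Dedekind completeness of $F$ is needed at this stage). The whole argument then reduces to two claims: that $U$ again belongs to $\Phi(E,F)$, and that $U$ is the least upper bound of $\{T,S\}$ in the pointwise order inherited from $\mathcal{OA}(E,F)$. Once this is done, replacing $\vee$ by $\wedge$ yields the infimum by the same reasoning, so that $\Phi(E,F)$ is a vector lattice.

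First I would check that $U$ is atomic subordinate to $\Phi$. Fix $\pi\in\mathfrak{B}(E)$ and $x\in E$; using $T\pi=\Phi(\pi)T$ and $S\pi=\Phi(\pi)S$ together with the fact that the order projection $\Phi(\pi)$ on $F$ is a lattice homomorphism, I obtain
\[
U(\pi x)=T(\pi x)\vee S(\pi x)=\Phi(\pi)(Tx)\vee\Phi(\pi)(Sx)=\Phi(\pi)\bigl(Tx\vee Sx\bigr)=\Phi(\pi)U(x),
\]
so $U\pi=\Phi(\pi)U$. The more delicate point is orthogonal additivity. For disjoint $x$, $y\in E$ I would use that, since $E$ has the principal projection property, $Tx=\Phi(\pi_{x})Tx$ and $Sx=\Phi(\pi_{x})Sx$ lie in the band $\Phi(\pi_{x})F$, while $Ty$, $Sy$ lie in $\Phi(\pi_{y})F$; because $\pi_{x}\wedge\pi_{y}=0$ and $\Phi$ is a Boolean homomorphism, these two bands are disjoint, so all four cross-products among $\{Tx,Sx\}$ and $\{Ty,Sy\}$ vanish. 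Writing $P=\Phi(\pi_{x})$ I then apply $P$ and its complement $Id-P$ to $U(x+y)=(Tx+Ty)\vee(Sx+Sy)$, noting that the right-hand side lies in $PF\oplus(Id-P)F$, and use that band projections commute with $\vee$ to get
\[
U(x+y)=(Tx\vee Sx)+(Ty\vee Sy)=U(x)+U(y).
\]
This splitting of a supremum over two complementary bands is the crux of the proof, and is exactly where both the disjointness-preserving property from Lemma \ref{Oradd} and the Boolean-homomorphism structure of $\Phi$ are indispensable; I expect it to be the only real obstacle.

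Finally, I would verify that $U=T\vee S$ in $\Phi(E,F)$. Since the order on $\mathcal{OA}(E,F)$ is the pointwise one, the relations $U\geq T$ and $U\geq S$ are immediate from $U(x)\geq Tx$ and $U(x)\geq Sx$; and if $V\in\Phi(E,F)$ satisfies $V\geq T$ and $V\geq S$, then $V(x)\geq Tx\vee Sx=U(x)$ for every $x\in E$, whence $V\geq U$. Thus $U$ is the least upper bound of $\{T,S\}$, it lies in $\Phi(E,F)$ by the previous paragraph, and the analogous map $x\mapsto Tx\wedge Sx$ furnishes the infimum. Everything except the orthogonal additivity of $U$ is a direct consequence of $\Phi(\pi)$ being a band projection and of the pointwise description of the order on $\mathcal{OA}(E,F)$.
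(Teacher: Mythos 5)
Your proof is correct, and its overall strategy --- define the lattice operations of $\Phi(E,F)$ by a pointwise formula and then check that the resulting map is again atomic, orthogonally additive, and a least upper bound --- is the same as the paper's. The execution differs in one genuine way: the paper never treats the binary supremum directly, but instead shows only that the modulus $|T|=T\vee(-T)$ exists, via $Rx:=|Tx|$; this suffices because in an ordered vector space the existence of all moduli already gives all binary suprema through $T\vee S=\tfrac{1}{2}\bigl(T+S+|T-S|\bigr)$. That reduction pays off exactly at the step you single out as the crux: for $R$, orthogonal additivity is the one-line identity $|T(x+y)|=|Tx+Ty|=|Tx|+|Ty|$, valid because $Tx\perp Ty$ by Lemma \ref{Oradd}, whereas your direct computation of $(T\vee S)(x+y)$ needs the band-splitting argument with $P=\Phi(\pi_x)$ and $Id-P$. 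Your argument there is sound: order projections are lattice homomorphisms, $\Phi(\pi_x)\Phi(\pi_y)=\Phi(\pi_x\wedge\pi_y)=0$ since $\Phi$ is a Boolean homomorphism, and $F=PF\oplus(Id-P)F$, so applying $P$ and $Id-P$ to $(Tx+Ty)\vee(Sx+Sy)$ splits the supremum as claimed. What your route buys in exchange for the extra work is the explicit formulas $(T\vee S)x=Tx\vee Sx$ and $(T\wedge S)x=Tx\wedge Sx$ with $F$ merely a vector lattice; the paper only records these formulas later, in Theorem \ref{thm:1}, under the additional hypothesis that $F$ is Dedekind complete (where they are derived from Theorem \ref{thm:PK} rather than pointwise).
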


\begin{proof}
Let $T\in\Phi (E,F)$. It suffices to show that $|T| = T \vee (-T)$ exists. Define the operator $R:E\to F$ by
\begin{gather*}
Rx:=|Tx|,\,x\in E.
\end{gather*}
For every $x\in E$,
\begin{gather*}
Tx\leq|Tx|=Rx \text{ and } (-Tx)\leq|Tx|=Rx.
\end{gather*}
Thus $T\leq R$ and $(-T)\leq R$. Assume that $G$ is an orthogonally
additive operator from $E$ to $F$ such that $T\leq G$ and $(-T)\leq
G$. Then, for every $x\in E$, $Tx\leq Gx$ and $(-T)x\leq Gx$, and therefore $Tx\vee(-Tx)=|Tx|=Rx\leq Gx$. Hence $R=T\vee(-T)$.

We show that $R$ is orthogonally additive. Indeed, let
$x$, $y\in E$ with $x\perp y$. By Lemma~\ref{Oradd}, $T$ is disjointness preserving. Then
\begin{gather*}
R(x+y)=|T(x+y)|=|Tx+Ty|=|Tx|+|Ty|=Rx+Ry.
\end{gather*}
Finally, we show that $R$ is an atomic operator. Let $\pi\in\mathfrak{B}(E)$ and $x\in E$. Then
\begin{gather*}
R\pi(x)=|T\pi(x)|=|\Phi(\pi)Tx|=\Phi(\pi)|Tx|=\Phi(\pi)Rx ,
\end{gather*}
and the proof is complete.
\end{proof}

The following theorem is the first main result of this section.

\begin{thm}\label{thm:1}
Let $E$ be a vector lattice with the principal projection property,
$F$ be a  Dedekind complete vector lattice and $\Phi$ be a Boolean
homomorphism from $\mathfrak{B}(E)$ to $\mathfrak{B}(F)$. Then
$\Phi(E,F)$ is a band in the vector lattice $\mathcal{OA}_{r}(E,F)$
and for any $T$, $S\in\Phi(E,F)$, $x\in E$ the following relations
hold:
\begin{enumerate}
\item~$(T\vee S)x=Tx\vee Sx$;
\item~$(T\wedge S)x=Tx\wedge Sx$;
\item~$(T)^{+}x=(Tx)^+$;
\item~$(T)^{-}(x)=(Tx)^{-}$;
\item~$|T|x=|Tx|$.
\end{enumerate}
\end{thm}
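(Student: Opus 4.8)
The plan is to locate $\Phi(E,F)$ inside the Dedekind complete vector lattice $\mathcal{OA}_{r}(E,F)$ and then verify, in turn, that it is a sublattice, an order ideal, and order closed. First, by Lemma~\ref{Oradd} every $T\in\Phi(E,F)$ is orthogonally additive and laterally-to-order bounded, so $\Phi(E,F)\subseteq\mathcal{P}(E,F)=\mathcal{OA}_{r}(E,F)$ by Theorem~\ref{thm:PK}; note that the order on $\mathcal{OA}_{r}(E,F)$ is pointwise, i.e. $A\leq B$ iff $Ax\leq Bx$ for all $x\in E$.

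I would deduce the five formulas from the single identity (v). Let $R\in\Phi(E,F)$ be the operator $Rx=|Tx|$ produced by Lemma~\ref{module}; it satisfies $R\geq T$ and $R\geq -T$, so the modulus $|T|$ taken in $\mathcal{OA}_{r}(E,F)$ obeys $|T|\leq R$. On the other hand Theorem~\ref{thm:PK}(5) gives $|Tx|\leq |T|(x)$, that is $Rx\leq |T|(x)$ for every $x$, whence $R\leq |T|$ in the pointwise order. Thus $|T|=R$ and $|T|x=|Tx|$, which is (v). Now (iii), (iv) follow by evaluating the vector-lattice identities $T^{+}=\tfrac12(|T|+T)$ and $T^{-}=\tfrac12(|T|-T)$ at $x$, and (i), (ii) follow from $T\vee S=\tfrac12(T+S+|T-S|)$, $T\wedge S=\tfrac12(T+S-|T-S|)$ together with (v) applied to the atomic operator $T-S$. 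In particular $T\vee S,\ T\wedge S,\ |T|\in\Phi(E,F)$, so $\Phi(E,F)$ is a sublattice whose lattice operations are computed pointwise.

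Next I would prove the order ideal property. Suppose $T\in\Phi(E,F)$, $S\in\mathcal{OA}_{r}(E,F)$ and $|S|\leq |T|$; I claim $S$ is atomic. Fix $\pi\in\mathfrak{B}(E)$ and $x\in E$. Since $|T|\in\Phi(E,F)$ is atomic subordinate to $\Phi$, we have $|T|(\pi x)=\Phi(\pi)|T|(x)=\Phi(\pi)|Tx|\in\Phi(\pi)F$. Combining Theorem~\ref{thm:PK}(5) for $S$ with $|S|\leq|T|$ gives $|S(\pi x)|\leq |S|(\pi x)\leq |T|(\pi x)\in\Phi(\pi)F$; as $\Phi(\pi)F$ is a band (hence an order ideal), $S(\pi x)\in\Phi(\pi)F$, so $\Phi(\pi)S(\pi x)=S(\pi x)$. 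The same reasoning with $\overline{\pi}$ gives $S(\overline{\pi}x)\in\Phi(\overline{\pi})F=\ker\Phi(\pi)$, so $\Phi(\pi)S(\overline{\pi}x)=0$. Since $S$ is orthogonally additive and $x=\pi x\sqcup\overline{\pi}x$, this yields $\Phi(\pi)Sx=\Phi(\pi)S(\pi x)+\Phi(\pi)S(\overline{\pi}x)=S(\pi x)=S\pi(x)$, so $S\in\Phi(E,F)$. Finally, to upgrade the ideal to a band I would show $\Phi(E,F)$ is order closed: take $0\leq T_{\alpha}\uparrow T$ in $\mathcal{OA}_{r}(E,F)$ with $T_{\alpha}\in\Phi(E,F)$. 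As the order is pointwise, for each fixed $y$ the net $(T_{\alpha}y)$ is increasing and bounded above by $Ty$, so $\phi(y):=\sup_{\alpha}T_{\alpha}y$ exists in $F$; $\phi$ is positive and, since suprema of increasing nets are additive, orthogonally additive, hence $\phi\in\mathcal{OA}_{+}(E,F)\subseteq\mathcal{OA}_{r}(E,F)$. As $\phi$ is an upper bound of $(T_{\alpha})$ and $T$ is the least one, $\phi=T$, i.e. $T_{\alpha}y\uparrow Ty$ pointwise. Then, using order continuity of the band projection $\Phi(\pi)$,
\[
T(\pi x)=\sup_{\alpha}T_{\alpha}(\pi x)=\sup_{\alpha}\Phi(\pi)T_{\alpha}x=\Phi(\pi)\sup_{\alpha}T_{\alpha}x=\Phi(\pi)Tx,
\]
so $T\in\Phi(E,F)$ and $\Phi(E,F)$ is a band.

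The main obstacle I expect is this last step: Theorem~\ref{thm:PK} supplies Dedekind completeness of $\mathcal{OA}_{r}(E,F)$ but not, a priori, that its suprema are evaluated pointwise, and establishing $T_{\alpha}y\uparrow Ty$ through the auxiliary operator $\phi$ is exactly what lets the pointwise relation $T\pi=\Phi(\pi)T$ survive the passage to the order limit. By contrast the formulas (i)--(v) are essentially formal once (v) is extracted from Lemma~\ref{module}, and the ideal property is a short band-theoretic argument resting on Theorem~\ref{thm:PK}(5).
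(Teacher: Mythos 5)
Your proof is correct, and it reorganizes the argument relative to the paper at two of the three stages. For the five formulas, the paper proves (i) first: it takes the representation $(T\vee S)(x)=\sup\{Ty+Sz:\,x=y\sqcup z\}$ from Theorem~\ref{thm:PK} and uses atomicity of $T$ and $S$ (writing $y=\pi_{y}x$, $z=\pi_{z}x$ and pushing through $\Phi(\pi_{y})$, $\Phi(\pi_{z})$) to bound each $Ty+Sz$ by $Tx\vee Sx$; then (ii)--(v) follow formally from (i). You instead extract (v) first by a sandwich: the operator $R$ with $Rx=|Tx|$ from Lemma~\ref{module} is an upper bound of $\{T,-T\}$ in the pointwise order, so $|T|\leq R$, while Theorem~\ref{thm:PK}(5) gives $R\leq |T|$; then (i)--(iv) are the usual lattice identities evaluated at $x$. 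This bypasses the disjoint-decomposition computation entirely. Your ideal step is the same idea as the paper's (dominate $S(\pi x)$ by an element of the band $\Phi(\pi)F$, kill $\Phi(\pi)S(\overline{\pi}x)$, and split $Sx$ by orthogonal additivity), but you run it directly for the full solidity condition $|S|\leq |T|$ with $S\in\mathcal{OA}_{r}(E,F)$, whereas the paper treats only $0\leq S\leq T$ with $S$ positive and leaves the reduction to that case implicit. Finally, for order closedness the paper takes an arbitrary order convergent net $T_{\lambda}\overset{\rm (o)}\longrightarrow T$ and a triangle inequality $|T\pi-\Phi(\pi)T|\leq |T\pi-T_{\lambda}\pi|+|\Phi(\pi)T-\Phi(\pi)T_{\lambda}|$, tacitly using that order convergence in $\mathcal{OA}_{r}(E,F)$ yields the corresponding pointwise statements; you instead invoke the standard fact that an ideal is a band as soon as it is closed under suprema of increasing positive nets, build the pointwise supremum $\phi$, identify $\phi=T$, and finish with order continuity of the projection $\Phi(\pi)$. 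Your version makes explicit exactly the point the paper glosses over --- that suprema of increasing nets in $\mathcal{OA}_{r}(E,F)$ are computed pointwise --- at the modest cost of invoking the increasing-net characterization of bands; the paper's argument is shorter but rests on that unstated pointwise-evaluation fact.
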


\begin{proof}
By Theorem \ref{thm:PK}, $\mathcal{OA}_r (E,F)$ is a vector lattice, and by Lemmas~\ref{Oradd} and \ref{module} and Theorem~\ref{thm:PK}, $\Phi(E,F)$ is a linear sublattice of $\mathcal{OA}_{r}(E,F) = {\mathcal P} (E,F)$. Suppose $T$, $S\in\Phi(E,F)$ and $x\in E$. By Theorem~\ref{thm:PK},
\begin{gather*}
(T\vee S)(x)=\sup\{Ty+Sz:\, x=y\sqcup z\}\geq Tx\vee Sx.
\end{gather*}
We remark that if $x = y \sqcup z$, then $y=\pi_{y}x=\pi_{y}y$, $z=\pi_{z}x=\pi_{z}z$,
$\Phi(\pi_{y})\perp\Phi(\pi_{z})$, $\Phi(\pi_{x})=\Phi(\pi_{y})+\Phi(\pi_{z})$ and
$\Phi(\pi_{x})(Tx\vee Sx)=Tx\vee Sx$. Hence,
\begin{align*}
Ty+Sz & = T\pi_{y}y+S\pi_{z}z \\
 & = T\pi_{y}x+S\pi_{z}x \\
 & = \Phi(\pi_{y})Tx + \Phi(\pi_{z})Sx \\
 & \leq \Phi(\pi_{y})(Tx\vee Sx)+\Phi(\pi_{z})(Tx\vee Sx) \\
 & = \Phi(\pi_{x})(Tx\vee Sx) \\
 & = Tx\vee Sx.
\end{align*}
Passing to the supremum   in the left-hand side of the above
inequality over all $y$, $z\in\mathcal{F}_{x}$ such that $x=y\sqcup z$
yields to
\begin{gather*}
(T\vee S)(x)\leq Tx\vee Sx
\end{gather*}
and it follows  that  $(T\vee S)(x)= Tx\vee Sx$. Now it is easy to
deduce formulas for the infimum, module, positive and negative parts of
operators.
\begin{align*}
 (T\wedge S)(x) & =-\Big((-T)\vee(-S)(x)\Big)=
-\Big((-Tx)\vee(-Sx)\Big)=Tx\wedge Sx;\\
 T^+(x) & =(T\vee 0)(x)=Tx\vee 0=(Tx)^+;\\
 T^{-}(x) & =(-T\vee 0)(x)=-Tx\vee 0=(Tx)^-;\\
 |T|x & =(T\vee(-T))(x)=Tx\vee(-Tx)=|Tx|.
\end{align*}

Suppose $S\in\mathcal{OA}_{+}(E,F)$, $T\in\Phi(E,F)$, and 
$0\leq S\leq T$. Then $0\leq S\pi(x)\leq
T\pi(x)$ for any $\pi\in\mathfrak{B}(E)$ and $x\in E$. Since
$\Phi(\pi)T=T\pi$ and
$\Phi(\pi)\perp\Phi(\pi^{\perp})=(\Phi(\pi))^{\perp}$, it follows
that
\begin{gather*}
(\Phi(\pi))^{\perp}T\pi(x)=0\Rightarrow(\Phi(\pi)^{\perp})S\pi(x)=(\Phi(\pi))^{\perp}S(x)=0.
\end{gather*}
Thus $S\pi(E)\subseteq\Phi(\pi)S(E)$ and therefore
$\Phi(\pi)S=S\pi$. Hence, $S\in\Phi (E,F)$, and we have shown that $\Phi (E,F)$ is an ideal in $\mathcal{OA}_r (E,F)$. 

Finally we show that $\Phi(E,F)$ is a band in
$\mathcal{OA}_{r}(E,F)$. Assume that $\pi\in\mathfrak{B}(E)$ and
$T_\lambda \stackrel{\rm (o)}{\longrightarrow} T$, where
$(T_\lambda)_{\lambda\in\Lambda}\subseteq\Phi(E,F)$ and
$T\in\mathcal{OA}_{r}(E,F)$. Then we have
\begin{align*}
|T\pi-\Phi(\pi)T| & = |T\pi-T_\lambda\pi+T_\lambda\pi-\Phi(\pi)T| \\
 & \leq |T\pi-T_\lambda\pi|+|\Phi(\pi)T-T_\lambda\pi| \\
 & = |T\pi-T_\lambda\pi|+|\Phi(\pi)T-\Phi(\pi)T_\lambda|.
\end{align*}
Since the net $\Big(|T\pi-T_\lambda\pi|+|\Phi(\pi)T-\Phi(\pi)T_\lambda|\Big)$
order converges to $0$ it follows that $T\pi=\Phi(\pi)T$ for any
$\pi\in\mathfrak{B}(E)$.
\end{proof}

Let $E$ be a vector lattice with the principal projection property
and $F$ be a Dedekind complete vector lattice. Then $\mathcal{OA}_r (E,F)$ is Dedekind complete by Theorem \ref{thm:PK}, and therefore, by a theorem of F. Riesz, every band is a projection band. By Theorem~\ref{thm:1}, every positive
orthogonally additive operator $T:E\to F$ thus has a unique decomposition
$T=T_{1}+T_{2}$ with $0\leq T_{1}\in\Phi(E,F)$ and
$T_{2}\in\Phi(E,F)^{\perp}$. The next theorem, which is the second main result of this section, gives a description of the band projection onto $\Phi (E,F)$.

By $\mathfrak{D}_{0}(E)$, or
$\mathfrak{D}_{0}$ for short, we denote the set of all finite
partitions of the identity operator $Id$, that is
\[
\mathfrak{D}_{0}=\Big\{(\pi_{i}):\,\,\pi_{k}\wedge \pi_{j}=0, k\neq
j;\,\sum_{i=1}^{n}\pi_{i}=Id;\,n\in\mathbb{N}\Big\}.
\]

\begin{thm}\label{th-2}
Let $E$ be a vector lattice with the principal projection property
and $F$ be a Dedekind complete vector lattice and
$T\in\mathcal{OA}_{+}(E,F)$. Then the component $T_{1}\in\Phi(E,F)$
is given by
\begin{gather*}
\inf\Big\{\sum_{i=1}^{n}\Phi(\pi_{i})T\pi_{i}:\,(\pi_{i})\in\mathfrak{D}_{0}\Big\}.
\end{gather*}
\end{thm}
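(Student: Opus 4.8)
The plan is to write $U_d:=\sum_{i=1}^{n}\Phi(\pi_i)T\pi_i$ for a partition $d=(\pi_i)\in\mathfrak{D}_{0}$ and to identify the asserted infimum with the band projection $T_1$ of $T$ onto $\Phi(E,F)$. First I would record that each $U_d$ is positive and orthogonally additive: positivity is clear since $T\geq 0$ and the $\Phi(\pi_i)$ are positive order projections, and orthogonal additivity follows because each $\pi_i$ preserves disjointness, so for $x\perp y$ one has $\pi_ix\perp\pi_iy$ and $T(\pi_i(x+y))=T(\pi_ix)+T(\pi_iy)$; hence $U_d\in\mathcal{OA}_{+}(E,F)$. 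The crucial structural observation is that $(U_d)$ is \emph{downward directed} for the refinement order on $\mathfrak{D}_{0}$: if $d'=(\rho_{ij})$ refines $d=(\pi_i)$, i.e. $\pi_i=\sum_j\rho_{ij}$, then using $\Phi(\rho_{ij})\leq\Phi(\pi_i)$, $T(\rho_{ij}x)\geq 0$ and the orthogonal additivity of $T$ along $\pi_ix=\sum_j\rho_{ij}x$, one gets $U_{d'}\leq U_d$. Since any two partitions have the common refinement $(\pi_i\wedge\rho_j)$, the set $\{U_d\}$ is directed downward and bounded below by $0$, so by Dedekind completeness of $\mathcal{OA}_{r}(E,F)$ (Theorem~\ref{thm:PK}) its infimum $R$ exists in $\mathcal{OA}_{r}(E,F)$.

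The first technical point is to show that this infimum is computed \emph{pointwise}. Set $R_0x:=\inf_d U_dx$ in $F$, which exists since $F$ is Dedekind complete and $U_dx\geq 0$. Because $(U_dx)_d$ and $(U_dy)_d$ are decreasing nets over the one directed index set $\mathfrak{D}_{0}$, the elementary identity $\inf_d(a_d+b_d)=\inf_d a_d+\inf_d b_d$ for such nets, together with the orthogonal additivity of each $U_d$, yields that $R_0$ is orthogonally additive. As $0\leq R_0\leq U_{(Id)}=T$, we have $R_0\in\mathcal{OA}_{r}(E,F)$, and since $R_0x\leq U_dx$ for all $d$, $R_0$ is a lower bound for $(U_d)$; comparing with $R$ (which satisfies $Rx\leq U_dx$, hence $R\leq R_0$, while being the greatest lower bound, hence $R\geq R_0$) gives $R=R_0$. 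Thus the operator infimum in the statement agrees with the pointwise one.

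Next I would prove $R\in\Phi(E,F)$, i.e. $R\pi=\Phi(\pi)R$ for every $\pi\in\mathfrak{B}(E)$. The device is to restrict the infimum to partitions refining $\{\pi,\overline{\pi}\}$, which are cofinal in $\mathfrak{D}_{0}$ and hence leave the decreasing infimum unchanged. For such a partition, split its members into those with $\rho_j\leq\pi$ and those with $\sigma_k\leq\overline{\pi}$; then $\rho_j\pi=\rho_j$, $\sigma_k\pi=0$, $\Phi(\rho_j)\leq\Phi(\pi)$ and $\Phi(\sigma_k)\perp\Phi(\pi)$, which give the key identity $\Phi(\pi)U_dx=U_d(\pi x)=\sum_j\Phi(\rho_j)T(\rho_jx)$. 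Passing to the infimum over these cofinal partitions and using that the order projection $\Phi(\pi)$ is order continuous (so that it commutes with the downward directed infimum) yields $\Phi(\pi)Rx=R(\pi x)$, as required.

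Finally I would show that $R$ is the largest element of $\Phi(E,F)$ lying in $[0,T]$, which identifies it with $T_1$. Indeed $0\leq R\leq T$ and $R\in\Phi(E,F)$ by the above. If $S\in\Phi(E,F)$ with $0\leq S\leq T$, then for any $d=(\pi_i)$ orthogonal additivity and atomicity give $Sx=\sum_iS(\pi_ix)=\sum_i\Phi(\pi_i)S(\pi_ix)\leq\sum_i\Phi(\pi_i)T(\pi_ix)=U_dx$, whence $Sx\leq R_0x=Rx$. Since the band projection of the positive operator $T$ onto the projection band $\Phi(E,F)$—namely $T_1$—is precisely the greatest element of $\Phi(E,F)\cap[0,T]$, we conclude $R=T_1$. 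I expect the main obstacle to be the second step: proving that the operator infimum is computed pointwise and that $R_0$ is genuinely orthogonally additive, since infima of directed families of operators need not be pointwise in general, and the argument hinges on the single index set $\mathfrak{D}_{0}$ forcing both coordinate nets to decrease simultaneously. A secondary subtlety is the interchange of $\Phi(\pi)$ with the infimum, which relies on order continuity of order projections and on the cofinality of the partitions refining $\{\pi,\overline{\pi}\}$.
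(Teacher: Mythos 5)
Your proof is correct, but the endgame differs from the paper's, so it is worth comparing the two. Both arguments begin identically: you and the paper form the downward directed family $U_d=\sum_{i}\Phi(\pi_i)T\pi_i$ indexed by $\mathfrak{D}_0$ under refinement, take its infimum $R$ in the Dedekind complete lattice $\mathcal{OA}_{r}(E,F)$, and show $R\pi=\Phi(\pi)R$ by passing to partitions compatible with $\{\pi,\overline{\pi}\}$; your cofinality plus order-continuity-of-$\Phi(\pi)$ argument is a more careful rendering of the computation the paper performs when it proves its property $(4)$, $R(R(T))=R(T)$. The divergence is in how $R$ is identified with the component $T_1$. The paper verifies that $T\mapsto R(T)$ is additive on the positive cone, idempotent, satisfies $0\leq R(T)\leq T$, and has fixed-point set exactly $\Phi(E,F)\cap\mathcal{OA}_{+}(E,F)$ (its properties $(1)$--$(4)$), so that $R$ extends to an order projection of $\mathcal{OA}_{r}(E,F)$ whose range is the band $\Phi(E,F)$, and uniqueness of the band projection gives $R(T)=T_1$. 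You instead fix $T$ and show that $R$ is the greatest element of $\Phi(E,F)\cap[0,T]$ --- via the inequality $S\leq U_d$ for every $S\in\Phi(E,F)$ with $0\leq S\leq T$ --- and then invoke the standard Riesz formula $P_{B}T=\sup\{S\in B:\,0\leq S\leq T\}$ for the projection onto a projection band $B$. Your route is somewhat more economical: it dispenses with the paper's additivity property $(2)$ and with idempotence altogether, at the cost of quoting the band-projection formula. A further merit of your write-up is the explicit verification that the operator infimum is computed pointwise and that the pointwise infimum is orthogonally additive (using that both coordinate nets decrease over the single directed index set $\mathfrak{D}_0$); the paper takes this for granted, yet uses it implicitly when it composes the infimum with $\rho$ on the right and pulls $\Phi(\rho)$ out of the infimum in its proof of property $(4)$.
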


\begin{proof}
For any $T\in\mathcal{OA}_{+}(E,F)$, set
\[
\mathfrak{A}(T) := \Big\{\sum_{i=1}^{n}\Phi(\pi_{i})T\pi_{i}:\,(\pi_{i})\in\mathfrak{D}_{0}\Big\}.
\]
Clearly $\mathfrak{A}(T)$ is a downward directed set of positive
orthogonally additive operators and taking into account the Dedekind
completeness of the vector lattice $\mathcal{OA}_{r}(E,F)$ we deduce
that there exists $R(T):=\inf\mathfrak{A}(T)$. We verify the
following properties for every $T\in\mathcal{OA}_{+}(E,F)$:
\begin{enumerate}
  \item[$(1)$] $0\leq R(T)\leq T$;
  \item[$(2)$] $R:\mathcal{OA}_{r}(E,F)\to \mathcal{OA}_{r}(E,F)$ extends to a linear operator;
  \item[$(3)$] $R(T)=T\Leftrightarrow T\in\Phi(E,F)$;
  \item[$(4)$] $R(R(T))=R(T)$.
\end{enumerate}
The relation $(1)$ is obvious. In order to prove $(2)$, we show that $R$ is additive on the positive cone. If $T_{1}$, $T_{2}\in \mathcal{OA}_+ (E,F)$, then for arbitrary $(\pi_{i})$, $(\pi_{j})\in\mathfrak{D}_{0}$ we have
\begin{align*}
& \sum\limits_{i}\Phi(\pi_{i})T_{1}\pi_{i}+
\sum\limits_{j}\Phi(\pi_{j})T_{2}\pi_{j} \\
& \geq \sum\limits_{k}\Phi(\pi_{k})(T_{1}+T_{2})\pi_{k} \\
& = \sum\limits_{k}\Phi(\pi_{k})T_{1}\pi_{k}+
\sum\limits_{k}\Phi(\pi_{k})T_{2}\pi_{k},
\end{align*}
where  $(\pi_{k})\in\mathfrak{D}_{0}$  is   finer  than $(\pi_{i})$
and $(\pi_{j})$.  Taking  the  infimum,  we obtain
\begin{align*}
R(T_{1})+R(T_{2})=R(T_{1}+T_{2}).
\end{align*}
Let us prove the equivalence in $(3)$. Assume that $0\leq T\in\Phi(E,F)$. We notice that  for every $(\pi_{i})\in\mathfrak{D}_{0}$
we have that $\Phi(\pi_{i})=Id$. Thus
\begin{align*}
\sum\limits_{i}\Phi(\pi_{i})T\pi_{i} = \sum\limits_{i}\Phi(\pi_{i})^2T = \sum\limits_{i}\Phi(\pi_{i})T = T.
\end{align*}
Passing to infimum on the left hand side of the above equality over all $(\pi_{i})\in\mathfrak{D}_{0}$ we get that $R(T)=T$. On the other hand, assume that $R(T)=T$. We show that $T\in\Phi(E,F)$.  Indeed, fix $\pi\in\mathfrak{B}(E)$.
Then $T\leq \Phi(\pi)T\pi+\Phi(Id-\pi)T(Id-\pi)$. It follows that $\Phi(\pi)T\leq\Phi(\pi)T\pi$ and therefore
$\Phi(\pi)T=T\pi$.

It  remains  to  verify the equality $(4)$. Suppose that $W=R(T)$,
with $T\in\mathcal{OA}_{+}(E,F)$. For  every
$\rho\in\mathfrak{B}(E)$, we may write
\begin{align*}
W\rho & =\inf\Big\{\sum\limits_{i}\Phi(\pi_{i})T\pi_{i}\rho:
\,(\pi_{i})\in\mathfrak{D}_{0} \Big\} \\
 & = \inf\Big\{\sum\limits_{i}\Phi(\pi'_{i})T\pi'_{i}\rho:
\,\sum\limits_{i} \pi'_{i} = \rho \Big\} \\
 & = \inf\Big\{\sum\limits_{i}\Phi(\rho)\Phi(\pi'_{i})T\pi'_{i}:
\,\sum\limits_{i} \pi'_{i} = \rho \Big\} .
\end{align*}
Thus,  $W\rho=\Phi(\rho)W$  for every $\rho\in\mathfrak{B}(E)$. By
the equivalence $(3)$  which is established above,  we obtain
$W=R(W)$.
\end{proof}

We remark that a similar theorem for  orthomorphisms was proved in \cite{Sc80}.

\section{Atomic operators  in  spaces  of measurable functions}
\label{sec3}

In this section  we investigate atomic operators in  spaces of real valued, measurable functions and get an analytic representation for this class of operators.\\

Let $(B,\Xi,\nu)$ be a $\sigma$-finite measure space. Choose an {\em equivalent} finite measure $\lambda$ on $\Xi$ such that $\nu$ and $\lambda$ have the same sets of measure $0$. As in Example~\ref{Nem} above, we denote by $L_{0}(B,\Xi,\nu)$ (or $L_{0}(\nu)$ for brevity) the set of all real valued, measurable functions on $B$. More
precisely, $L_0(\nu)$ consists of equivalence
classes of such functions, where as usual two functions $f$ and $g$ are said to be  equivalent if they coincide almost everywhere on $B$; note that $L_0 (\nu)$ and $L_0 (\lambda )$ coincide.
The vector space $L_0(\nu)$ with the
metric $\rho_{L_0}$, defined by
\[
\rho_{L_0}(f,g) := \int_{B}\frac{|f(s)-g(s)|}{1+|f(s)-g(s)|}\,d\lambda \quad (f,\, g\in L_0 (\nu )) ,
\]
becomes a complete metric space, and the convergence with respect to the metric $\rho_{L_0}$ is equivalent to the convergence in measure, meaning here the measure $\lambda$. Recall that $(f_n)$ converges to $f$ in measure (notation $f_n\overset{\lambda}\rightarrow f$; see \cite[Theorem~1.82]{AbAl02}), if, for every $\delta >0$, $\lim_{n\to\infty} \lambda (\{ |f_n-f|>\delta\}) = 0$. More precisely, the convergence in measure is characterised by the following statement.

\begin{lemma}[{\cite[Theorem~1.82]{AbAl02}}] \label{meas-1}
Let $(B,\Xi,\nu)$ be a $\sigma$-finite measure space. Choose an equivalent finite measure $\lambda$ on $\Xi$ such that $\nu$ and $\lambda$ have the same sets of measure $0$.
For a sequence $(f_{n})\subseteq L_{0}(\nu )$ and and element $f\in L_0 (\nu )$ the following equivalent:
\begin{enumerate}
\item~$f_n\overset{\lambda}\rightarrow f$;\\
\item~every subsequence of $(f_n)$ has a subsequence that converges pointwise $\nu$-almost everywhere to $f$;\\
\item~for every $D\in\Xi$ with $\nu (D)<\infty$,
\[
\lim\limits_{n\to\infty}\int_{D}\frac{|f_n(s)-f(s)|}{1+|f_n(s)-f(s)|}\,d\nu=0.
\]
\end{enumerate}
\end{lemma}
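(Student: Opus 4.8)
The plan is to establish the cycle of implications $(1)\Rightarrow(2)\Rightarrow(3)\Rightarrow(1)$, using throughout that $\nu$ and $\lambda$ have the same null sets, so that ``$\nu$-almost everywhere'' and ``$\lambda$-almost everywhere'' coincide, and that $\lambda$ is finite.

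First I would prove $(1)\Rightarrow(2)$ by the classical Riesz subsequence argument. Given an arbitrary subsequence of $(f_n)$, it still converges to $f$ in $\lambda$-measure, so one can select a further subsequence $(f_{n_k})$ with $\lambda(\{|f_{n_k}-f|>2^{-k}\})<2^{-k}$. Since these numbers are summable, the Borel--Cantelli lemma shows that the set on which $|f_{n_k}-f|>2^{-k}$ holds for infinitely many $k$ is $\lambda$-null, hence $\nu$-null; off this set $f_{n_k}\to f$ pointwise, which is exactly $(2)$. Next, $(2)\Rightarrow(3)$ follows from the subsequence principle for real sequences combined with dominated convergence. Fix $D\in\Xi$ with $\nu(D)<\infty$ and set $a_n:=\int_{D}\frac{|f_n-f|}{1+|f_n-f|}\,d\nu$. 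To show $a_n\to0$ it suffices that every subsequence of $(a_n)$ admit a further subsequence tending to $0$; but by $(2)$ the corresponding functions possess a sub-subsequence converging $\nu$-a.e. to $f$, and since the integrands are bounded by the constant $1$, which is $\nu$-integrable on $D$ because $\nu(D)<\infty$, dominated convergence yields the claim.

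The main obstacle is the implication $(3)\Rightarrow(1)$, where the interplay between the $\sigma$-finite measure $\nu$ and the equivalent finite measure $\lambda$ genuinely enters. Here I would first use $\sigma$-finiteness to write $B=\bigcup_k B_k$ with $B_k\uparrow B$ and $\nu(B_k)<\infty$; finiteness of $\lambda$ then gives $\lambda(B\setminus B_k)\to0$ by continuity from above. On each $B_k$ the elementary monotonicity of $t\mapsto t/(1+t)$ yields the Chebyshev-type bound $\frac{\delta}{1+\delta}\,\nu(\{|f_n-f|>\delta\}\cap B_k)\le\int_{B_k}\frac{|f_n-f|}{1+|f_n-f|}\,d\nu$, whose right-hand side tends to $0$ by $(3)$ applied to $D=B_k$, forcing $\nu(\{|f_n-f|>\delta\}\cap B_k)\to0$. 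The delicate point is converting this $\nu$-smallness into $\lambda$-smallness: for this I would invoke the $\varepsilon$--$\delta$ form of absolute continuity, valid precisely because $\lambda$ is finite and $\lambda\ll\nu$, which guarantees that sets of sufficiently small $\nu$-measure have arbitrarily small $\lambda$-measure. Splitting $\{|f_n-f|>\delta\}$ into its part inside $B_k$ and its part inside $B\setminus B_k$, and choosing $k$ large, then delivers $\lambda(\{|f_n-f|>\delta\})\to0$, that is, condition $(1)$.

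I expect the technical heart of the argument to be this finite-measure absolute-continuity lemma, which I would itself prove by contradiction: assuming it fails produces sets $E_n$ with $\nu(E_n)<2^{-n}$ yet $\lambda(E_n)\ge\varepsilon$, and then the $\limsup$ set has $\nu$-measure $0$ but, by continuity from above for the finite measure $\lambda$, positive $\lambda$-measure, contradicting $\lambda\ll\nu$. Everything else reduces to the standard Borel--Cantelli and dominated-convergence mechanisms together with the exhaustion of $B$ by sets of finite $\nu$-measure.
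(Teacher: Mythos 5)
Your proposal is correct and complete. Note first that the paper itself offers no proof of this lemma: it is quoted as a known characterisation of convergence in measure from \cite[Theorem~1.82]{AbAl02}, so there is no internal argument to compare yours against. Your cycle $(1)\Rightarrow(2)\Rightarrow(3)\Rightarrow(1)$ is the standard and natural one: Riesz's subsequence argument via Borel--Cantelli for $(1)\Rightarrow(2)$ (using that $\lambda$-null and $\nu$-null sets coincide); the subsequence principle for real sequences combined with dominated convergence for $(2)\Rightarrow(3)$, where the constant $1$ serves as dominating function precisely because $\nu(D)<\infty$; and, for the genuinely substantive implication $(3)\Rightarrow(1)$, the exhaustion $B_k\uparrow B$ by sets of finite $\nu$-measure together with the Chebyshev-type bound $\frac{\delta}{1+\delta}\,\nu\left(\{|f_n-f|>\delta\}\cap B_k\right)\le\int_{B_k}\frac{|f_n-f|}{1+|f_n-f|}\,d\nu$ and the $\varepsilon$--$\eta$ form of absolute continuity of the finite measure $\lambda$ with respect to $\nu$. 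You correctly isolate that last point as the technical heart --- it is the only place where finiteness of $\lambda$ and equivalence of the two measures genuinely enter --- and your contradiction argument for it (sets $E_n$ with $\nu(E_n)<2^{-n}$ but $\lambda(E_n)\ge\varepsilon$, Borel--Cantelli giving $\nu(\limsup_n E_n)=0$ while continuity from above for the finite measure $\lambda$ gives $\lambda(\limsup_n E_n)\ge\varepsilon$) is sound. Your write-up would serve as a self-contained proof of the lemma that the paper leaves to the literature.
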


We say that a function
$N:B\times\mathbb{R}\rightarrow\mathbb{R}$ is a \textit{superpositionally measurable function}, or briefly that it is {\em sup-measurable}, if
\begin{enumerate}
 \item[$(C'_{1})$] $N(\cdot,f(\cdot))$ is measurable for every $f\in L_{0}(\nu)$.
\end{enumerate}
We call the function $N$ an {\em  $\mathfrak{S}$-function}, if it is sup-measurable and if 
\begin{enumerate} 
\item[$(C_{0})$] $N(s,0)=0$ for $\nu$-almost all $s\in B$.
\end{enumerate}
This normalisation condition already appeared in Example~\ref{Nem}, where we also defined $\mathfrak{K}$-functions and Caratheodory functions. Note that every sup-measurable function $N$ satisfies automatically the condition $(C_1)$ from Example~\ref{Nem}, that is, $N(\cdot,r)$ is measurable for all $r\in\mathbb{R}$. Indeed, it suffices to identify $r\in\mathbb{R}$ with the corresponding constant function $r1_{B}$. It is well known that every Carath\'{e}odory function  $N$ is sup-measurable; see for instance \cite[Chapter 1.4]{ApZa90}. Sup-measurability of a function $N:B\times\mathbb{R}\rightarrow\mathbb{R}$ is the weakest condition under which the superposition operator $T_N$ given by 
\[
 T_N f :=  N(\cdot ,f(\cdot )) \quad (f\in L_0 (\nu )) 
\]
is well-defined on $L_0 (\nu )$.

Given two sup-measurable functions $N$, $K : B\times \mathbb{R}\to\mathbb{R}$, we write $N\preceq K$ if, for every $f\in L_{0}(\nu)$, $N(\cdot ,f(\cdot ))\leq K(\cdot ,f(\cdot ))$
$\nu$-almost everywhere on $B$. We say that $N$ and $K$ are {\em sup-equivalent} (notation $N\simeq K$) if both $N\preceq K$ and $K\preceq N$.

Let $N:B\times\mathbb{R}\rightarrow\mathbb{R}$ be an $\mathfrak{K}$-function, and let $T_N$ be the associated superposition operator on $L_0 (\nu )$. Then,  
by Example~\ref{Nem}, $T_N$ is atomic with respect to the identity Boolean homomorphism. In addition, since $L_0 (\nu )$ is Dedekind complete and by Lemma \ref{Oradd} and Theorem \ref{thm:PK}, $T_N$ is regular orthogonally additive and disjointness preserving, but actually these two properties can easily be verified directly for a superposition operator. The main result of this section shows that in $L_0 (\nu )$, and {\em up to Boolean homomorphisms}, all sequentially order continuous, atomic operators are superposition operators. Before stating the precise statement, let us recall a definition and introduce an operator. 

We recall that an orthogonally additive operator $T:E\to F$ is  {\it
sequentially order continuous}, if for every  order convergent sequence
$(x_{n})_{n}\subseteq E$ with $x_{n}\overset{\rm
(o)}\longrightarrow x$ the sequence $(Tx_{n})_{n}\subseteq F$ is order convergent to $Tx$.

Now, let $(A,\Sigma,\mu)$ be a second $\sigma$-finite measure space. Recall that for every measurable set $A'\in\Sigma$ the multiplication operator $\pi_{A'}$ associated to the multiplication by the characteristic function $1_{A'}$ is an order projection on $L_0 (\mu )$. In fact, every order projection is of this form, and when we consider the factor Boolean algebra $\Sigma' =\Sigma / \Sigma_0$ as in Example~\ref{Nem} (factorization by the sets of $\mu$-measure zero), then we obtain a one-to-one correspondence, that is, the Boolean algebras $\Sigma'$ and $\mathfrak{B} (L_0 (\mu ))$ are isomorphic. 

Now let $\Phi: \mathfrak{B} (L_0 (\mu )) \to \mathfrak{B} (L_0 (\nu ))$ be a Boolean homomorphism. We identify it with a Boolean homomorphism $\Phi : \Sigma'\to\Xi'$, and we define an associated, linear {\em shift operator} $S_{\Phi}:L_{0}(\mu)\to L_{0}(\nu)$ in the
following way. First, for every simple function
$f=\sum\limits_{i=1}^{n}r_{i}1_{A_{i}}\geq 0$ (where $r_{i} \in\mathbb{R}$ and the  $A_{i}\in\Sigma$ are mutually
disjoint) we set
\[
S_{\Phi}f := \sum\limits_{i=1}^{n}r_{i}1_{\Phi(A_{i})} .
\]
The function $S_{\Phi}f$ is a simple function and therefore measurable, and its definition does not depend on the representation of $f$. Note in this context that since $\Phi$ is a Boolean homomorphism, then the $\Phi (A_i)$ are mutually disjoint, too. Second, for every positive, measurable function $f\in L_0 (\mu )^+$ there exists an increasing sequence $(f_{n})$ of positive, simple functions such that $f=\sup\limits_{n}f_{n}$. One can easily show that the sequence $(Sf_n)$ is order bounded in $L_0 (\nu )$. We then put
\[
S_{\Phi} f:= \sup_{n}S_{\Phi} f_{n} \in L_0 (\nu )^+.
\]
This definition of $S_\Phi f$ does not depend on the choice of the approximating sequence $(f_n)$. Finally, for arbitrary $f\in L_0 (\mu )$ we set
\[
S_{\Phi}f := S_{\Phi} f_{+} - S_{\Phi} f_{-} .
\]
The operator $S_\Phi$ thus defined is a linear, positive operator from $L_0 (\mu )$ into  $L_0 (\nu )$.

Now let $\Phi$ be, in addition, a Boolean isomorphism. Then $\Phi^{-1}:\Xi'\to\Sigma'$ is a Boolean isomorphism, too, and it follows from the definition that $S_\Phi$ is invertible and  $S_\Phi^{-1}=S_{\Phi^{-1}}$. In particular, $S_\Phi^{-1}$ is linear and positive, too. We show that $S_\Phi$
is a sequentially order continuous operator. Assume, on the contrary, that $S_\Phi$ is not sequentially order continuous. Then there exists a sequence $(f_n)$ in $L_0(\mu)$ such that $f_n\downarrow 0$ and $S_\Phi f_n\not\downarrow 0$. Passing to an
appropriate subsequence, we can find  $g>0$ such that $S_\Phi f_n \geq g$ for every $n\in\mathbb{N}$. Applying $S_\Phi^{-1}$ to this inequality yields $f_n \geq S_\Phi^{-1} g$ for every $n\in\mathbb{N}$. Since $S_\Phi^{-1} g >0$, this yields a contradiction.

The next theorem is the main result of the section.

\begin{thm}\label{Rep}
Let  $(A,\Sigma,\mu)$ and $(B,\Xi,\nu)$ be $\sigma$-finite measure
spaces, $\Phi:\Sigma'\to\Xi'$ be a Boolean isomorphism, and
$T:L_{0}(\mu)\to L_{0}(\nu)$ be a regular orthogonally additive
operator. Then the following statements are equivalent:
\begin{enumerate}
\item~$T$ is a continuous (with respect to the metric $\varrho_{L_0}$,  atomic operator subordinate to $\Phi$;
\item~there exists a $\mathfrak{K}$-function
$N :B\times\mathbb{R}\rightarrow\mathbb{R}$ such that $T=T_N\circ S_\Phi$, where $T_N$ is the superposition operator associated with $N$ and $S_\Phi$ is the shift operator associated with $\Phi$, that is,
    \begin{align}\label{represent}
    Tf = N (\cdot ,S_{\Phi}f(\cdot )) \quad (f\in L_0 (\mu )) .
    \end{align}
\end{enumerate}
\end{thm}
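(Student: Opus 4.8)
The plan is to prove the equivalence by establishing both implications, with the bulk of the work lying in constructing the $\mathfrak{K}$-function $N$ from an abstract atomic operator. For the easy direction $(2)\Rightarrow(1)$, I would argue as follows. Given the representation $Tf = N(\cdot,S_\Phi f(\cdot))$, I first observe that $T = T_N\circ S_\Phi$ is a composition of two well-behaved maps: by Lemma~\ref{lem.nemytski} the superposition operator $T_N$ is atomic subordinate to the identity homomorphism on $\mathfrak{B}(L_0(\nu))$, and by the construction recalled above $S_\Phi$ is a positive linear operator intertwining the order projections, i.e.\ $S_\Phi\,\pi_{A'} = \pi_{\Phi(A')}\,S_\Phi$ for every $A'\in\Sigma'$. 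Composing these two intertwining relations yields $T\pi = \Phi(\pi)T$ for every $\pi\in\mathfrak{B}(L_0(\mu))$, so $T$ is atomic subordinate to $\Phi$. Continuity with respect to $\varrho_{L_0}$ would follow by combining the sequential order continuity of $S_\Phi$ (verified above) with the continuity of the Nemytskii operator $T_N$ attached to a Carath\'{e}odory function, using the characterisation of convergence in measure in Lemma~\ref{meas-1}: along a subsequence one passes to $\nu$-almost everywhere convergence, applies condition $(C_2)$ pointwise, and returns to convergence in measure.

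For the main direction $(1)\Rightarrow(2)$, the strategy is to first reduce to the case $\Phi = Id$ and then reconstruct the generating function. Since $\Phi$ is a Boolean isomorphism, the shift operator $S_\Phi$ is invertible with $S_\Phi^{-1}=S_{\Phi^{-1}}$, both linear, positive and sequentially order continuous. I would therefore set $\widetilde{T} := T\circ S_\Phi^{-1}$ and check that $\widetilde{T}$ is a continuous, regular orthogonally additive operator that is atomic subordinate to the \emph{identity} homomorphism on $\mathfrak{B}(L_0(\nu))$; the intertwining relation transfers exactly because $S_\Phi^{-1}$ conjugates $\Phi$ back to the identity. If I can produce a $\mathfrak{K}$-function $N$ with $\widetilde{T}g = N(\cdot,g(\cdot))$ for all $g\in L_0(\nu)$, then $Tf = \widetilde{T}(S_\Phi f) = N(\cdot,S_\Phi f(\cdot))$ gives exactly \eqref{represent}.

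The heart of the matter is thus to represent a continuous, atomic-subordinate-to-$Id$ operator on $L_0(\nu)$ as a genuine superposition operator. Here I would define, for each rational (or real) scalar $r$, a candidate section by $N(\cdot,r) := \widetilde{T}(r1_B)(\cdot)$, a measurable function by sup-measurability of the image. Atomicity subordinate to the identity forces locality: for a simple function $g = \sum_i r_i 1_{D_i}$ with the $D_i$ disjoint, the relation $\widetilde{T}\pi_{D_i} = \pi_{D_i}\widetilde{T}$ together with orthogonal additivity (Lemma~\ref{Oradd}) gives $\widetilde{T}g = \sum_i 1_{D_i}\widetilde{T}(r_i 1_B) = N(\cdot,g(\cdot))$ $\nu$-a.e., so the representation holds on simple functions. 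I would then verify that the assignment $r\mapsto N(\cdot,r)$ can be chosen jointly measurable in $(s,r)$ and $\nu$-a.e.\ continuous in $r$: measurability in $r$ comes from taking a countable dense set of scalars and exploiting continuity, while continuity in $r$ for almost every $s$ is where the hypothesis that $\widetilde T$ is continuous in measure enters decisively --- if $r_n\to r$ in $\mathbb{R}$ then $r_n 1_B\to r 1_B$ in measure, hence $N(\cdot,r_n)=\widetilde{T}(r_n1_B)\to\widetilde{T}(r1_B)=N(\cdot,r)$ in measure, and passing to subsequences (Lemma~\ref{meas-1}) one upgrades this to pointwise a.e.\ continuity on a common null set. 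The normalisation $(C_0)$, $N(s,0)=0$, is immediate from $\widetilde{T}(0)=0$. Finally I would pass from simple functions to arbitrary $f\in L_0(\nu)$ by approximation in measure, using continuity of $\widetilde{T}$ on one side and the Carath\'{e}odory continuity of $N$ on the other, to conclude $\widetilde{T}g = N(\cdot,g(\cdot))$ for all $g$.

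I expect the main obstacle to be the simultaneous control of measurability and continuity of $N$ as a function of the two variables $(s,r)$ --- that is, producing a single $\mathfrak{K}$-function rather than merely a family of measurable sections indexed by $r$. The delicate point is selecting representatives coherently across the uncountably many scalars $r$ so that a \emph{single} common $\nu$-null set handles the continuity condition $(C_2)$; this is exactly the classical difficulty in representing Nemytskii-type operators, and it is resolved by first fixing the representation on a countable dense set of rationals using the countably many a.e.\ relations, defining $N$ on all reals by continuity, and only then checking that the resulting $N$ is sup-measurable and satisfies $(C_0)$--$(C_2)$ off a single null set.
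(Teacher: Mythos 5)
Your direction $(2)\Rightarrow(1)$ and your reduction of $(1)\Rightarrow(2)$ to the case $\Phi=Id$ via $\widetilde{T}:=T\circ S_\Phi^{-1}$ coincide with the paper (the paper's operator $T_{\hat N}$ is exactly your $\widetilde{T}$), and the computation on simple functions is the same. The gap is in the decisive step where you construct the $\mathfrak{K}$-function: you claim that from $r_n\to r$ one gets $N(\cdot,r_n)=\widetilde{T}(r_n 1_B)\to \widetilde{T}(r1_B)=N(\cdot,r)$ in measure, and that ``passing to subsequences one upgrades this to pointwise a.e.\ continuity on a common null set,'' after which $N(s,\cdot)$ is defined on all of $\mathbb{R}$ ``by continuity'' from its values on $\mathbb{Q}$. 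This does not work. Convergence in measure along countably many sequences only yields, for a.e.\ $s$, convergence along countably many chosen subsequences; it does not yield that $q\mapsto N(s,q)$ is uniformly continuous on bounded subsets of $\mathbb{Q}$ for a.e.\ $s$, which is precisely what an extension ``by continuity'' presupposes. Concretely, let $B=[0,1]$ with Lebesgue measure and $N(s,r):=\sin\bigl(1/(r-s)\bigr)$ for $r\neq s$, $N(s,s):=0$. For every sequence $r_n\to r$ one has $N(\cdot,r_n)\to N(\cdot,r)$ a.e., hence in measure, so the map $r\mapsto N(\cdot,r)\in L_0$ is perfectly continuous in measure; yet for a.e.\ $s$ the section $N(s,\cdot)$ admits no continuous version, and $N$ is not sup-equivalent to any Carath\'eodory function. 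Thus continuity of $\widetilde{T}$ on \emph{constant} functions --- which is all your argument ever invokes --- cannot produce a $\mathfrak{K}$-function, and even full continuity on $L_0(\nu)$ cannot be exploited by this sequence-by-sequence device; your own closing remark that the difficulty ``is resolved by first fixing the representation on the rationals and defining $N$ by continuity'' is exactly the assertion that remains unproved.

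This is why the paper argues differently. It introduces the truncated set functions $\mathfrak{t}_k(D,f)=\int_D\bigl((-k)\vee(T_{\hat N}f)(x)\wedge k\bigr)\,d\nu(x)$ and their inf-convolutions $\mathfrak{t}_{k,\lambda}(D,f)=\inf_{g\in L_0}\bigl[\mathfrak{t}_k(D,g)+\lambda\int_D(|g(x)-f(x)|\wedge k)\,d\nu(x)\bigr]$, where the infimum runs over \emph{all} $g\in L_0(\nu)$ --- this is where continuity of the operator on the whole space enters, through \eqref{eq.prop.tk.2} --- and then the Lipschitz estimate \eqref{eq.prop.tk.3}, the Radon--Nikodym theorem, and the Shragin-function machinery of \cite[Theorems 1.1 and 1.3]{ApZa90} produce densities $N_{k,\lambda}(s,\cdot)$ that are $\lambda$-Lipschitz in $r$ off a single null set. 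The uniform Lipschitz bound is what legitimately permits extension from $\mathbb{Q}$ to $\mathbb{R}$ and the reduction to countably many a.e.\ identities; your plan contains no substitute for it. To repair the proof you would have to import this regularisation scheme (the paper follows \cite[Lemma 1.7]{ApZa90}) or an equivalent mechanism; as written, the construction fails at condition $(C_2)$.
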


\begin{proof}
$(1)\Rightarrow(2)$. Let $T :L_0 (\mu ) \to L_0 (\nu )$ be a  continuous, atomic operator subordinate to $\Phi$. Then we define a function $\hat{N} :B\times\mathbb{R}\to \mathbb{R}$ by
\[
\hat{N} (\cdot, r):=T(r1_{A})(\cdot) \quad (r\in\mathbb{R} ) .
\]
We note that $\hat{N} (\cdot,0)=T(0)=0$ and therefore $\hat{N}(\cdot ,0)=0$ $\nu$-almost everywhere. Moreover, $\hat{N} (\cdot,r)$ is $\Xi$-measurable for every $r\in\mathbb{R}$. Now, take a simple function $f=\sum\limits_{i=1}^{n}r_{i}1_{A_{i}}$, where the $A_{i}$ are mutually
disjoint measurable subsets of $A$ and $r_{i}\in\mathbb{R}$, $1\leq
i\leq n$. Then
\allowdisplaybreaks{
\begin{align*}
 Tf & = T\Big(\sum\limits_{i=1}^{n} r_{i}1_{A_{i}}\Big) \\
 & = \sum\limits_{i=1}^{n} T(r_{i}1_{A_{i}}) \\
 & = \sum\limits_{i=1}^{n} T\pi_{A_{i}}(r_{i}1_{A}) \\
 & = \sum\limits_{i=1}^{n} \Phi(\pi_{A_{i}})T(r_{i}1_{A}) \\
 & = \sum\limits_{i=1}^{n} \hat{N} (\cdot ,r_{i})\, 1_{\Phi(A_{i})} \\
 & = \sum\limits_{i=1}^{n} \hat{N} (\cdot ,r_{i}1_{\Phi(A_{i})}) \\
 & = \hat{N} (\cdot ,\sum\limits_{i=1}^{n} r_{i} 1_{\Phi( A_{i})}) \\
 & = \hat{N} (\cdot ,S_{\Phi}\Big(\sum\limits_{i=1}^{n}r_{i}1_{A_{i}}\Big) ) \\
 & = \hat{N} (\cdot ,S_{\Phi}f ) .
\end{align*}}
In other words, when we define the operator $T_{\hat{N}} : L_{0}(\nu)\to L_{0}(\nu)$, $T_{\hat{N}} := T\circ S_\Phi^{-1}$, then $T_{\hat{N}} f = \hat{N} (\cdot , f(\cdot ))$ for every finite step function $f$, so that on the space of finite step functions, the operator $T_{\hat{N}}$ acts like a superposition operator. At the same time, $T_{\hat{N}}$ is defined everywhere on $L_0 (\nu )$ and  it is  continuous with respect to the metric $\varrho_{L_0}$ by assumption on $T$ and by sequential order continuity of $S_\Phi^{-1}$ which implies continuity with respect to $\varrho_{L_0}$.  

It is, however, not clear whether $\hat{N}$ is sup-measurable. If it was sup-measurable, then we could invoke \cite[Theorem 1.4]{ApZa90} in order to show that $\hat{N}$ is sup-equivalent to a Caratheodory function $N$. For the construction of a Caratheodory function associated with $T_{\hat{N}}$, we proceed as in the proof of \cite[Lemma 1.7]{ApZa90}, that is, by regularisation and approximation.

We may for our purposes without loss of generality assume that $(B,\Xi ,\nu )$ is a finite measure space. In fact, if this measure space was only $\sigma$-finite, then we could replace the measure $\nu$ by an equivalent finite measure $\lambda$, as in the definition of the metric $\varrho_{L_0}$.  Define, for every $k\in\N$, the function  $\mathfrak{t}_k : \Xi \times L_0 (\nu ) \to \R$  by
\[
 \mathfrak{t}_k (D,f) := \int_D ((-k) \vee (T_{\hat{N}} f) (x) \wedge k ) \; d\nu (x) \quad (D\in\Xi , f\in L_0 (\nu ) ),
\]
and then for every $k\in\N$ and every $\lambda >0$  the regularized function 
$\mathfrak{t}_{k,\lambda} :\Xi \times L_0 (\nu ) \to\R$ by
\begin{align*}
 & \mathfrak{t}_{k,\lambda} (D,f) := \inf_{g\in L_0} [ \mathfrak{t}_k (D,g) + \lambda \int_D ( |g(x) - f(x) | \wedge k )\; d\mu (x) ] . 
\end{align*}
Then, for every $k\in\N$, $D\in\Xi$, $f\in L_0 (\nu )$, $r$, $\hat{r}\in\R$,
\begin{align}
\label{eq.prop.tk.1} & \mathfrak{t}_{k,\lambda}  (D,f) \leq \mathfrak{t}_{k,\lambda'}  (D,f) \leq \mathfrak{t}_k (D,f) 
 \text{ for every } 0 < \lambda \leq \lambda' , \\
\label{eq.prop.tk.2} & \lim_{\lambda\to\infty} \mathfrak{t}_{k,\lambda} (D,f)  = \mathfrak{t}_k (D,f) , \text{ and} \\
\label{eq.prop.tk.3} & | \mathfrak{t}_{k,\lambda} (D,r) - \mathfrak{t}_{k,\lambda} (D,\hat{r}) | \leq \lambda\, \mu (D) \, |r-\hat{r}| ,
\end{align}
where in the third line we identify a real number $r$ with the corresponding constant function $r\, 1_B$. In order to see that $\mathfrak{t}_{k,\lambda} (D,f)  \leq \mathfrak{t}_k (D,f)$ (see \eqref{eq.prop.tk.1}), it suffices simply to take $g=f$ in the definition of $\mathfrak{t}_{k,\lambda}$. Similarly, from the definition one sees that $\mathfrak{t}_{k,\lambda}$ increasing in $\lambda >0$ (see \eqref{eq.prop.tk.1}). The property \eqref{eq.prop.tk.2} follows from the order continuity of $T_{\hat{N}}$. Finally, in order to prove \eqref{eq.prop.tk.3}, fix $f$, $\hat{f}\in L_0 (\nu )$. By definition, for every $g\in L_0 (\nu )$,
\[
 \mathfrak{t}_{k,\lambda} (D,f) \leq \mathfrak{t}_k (D,g) + \lambda \int_D ( |g(x) - f(x) | \wedge k )\; d\nu (x) .
\]
Moreover, for every $\varepsilon >0$ there exists $g_\varepsilon\in L_0 (\nu )$ such that 
\[
 \mathfrak{t}_{k,\lambda} (D,\hat{f}) \geq \mathfrak{t}_k (D, g_\varepsilon ) + \lambda \int_D ( |g_\varepsilon (x) - \hat{f} (x) | \wedge k )\; d\nu (x) -\varepsilon .
\]
When we substract both inequalities and take $g=g_\varepsilon$ in the first inequality, then we obtain 
\begin{align*}
 \mathfrak{t}_{k,\lambda} (D,f) - \mathfrak{t}_{k,\lambda} (D,\hat{f}) & \leq \lambda \int_{D} (|f(x) - \hat{f} (x)|\wedge (2k) ) \; d\nu (x) + \varepsilon ,
\end{align*}
or, when $f=r$ and $\hat{f} = \hat{r}$ are constant functions,
\[
  \mathfrak{t}_{k,\lambda}  (D,r) - \mathfrak{t}_{k,\lambda}  (D,\hat{r}) \leq \lambda\, \mu (D) \, |r-\hat{r}| + \varepsilon .
\]
Since this inequality holds for arbitrary $\varepsilon >0$, and by changing  the roles of $r$ and $\hat{r}$, one obtains \eqref{eq.prop.tk.3}.  

One easily shows that for every $k\in\N$, $\lambda >0$ and $r\in\R$ the function $\mathfrak{t}_{k,\lambda}  (\cdot ,r)$ is a measure on $(B,\Xi )$. By the inequality \eqref{eq.prop.tk.3}, this measure is absolutely continuous with respect to $\nu$. By the Radon-Nikodym theorem, for every $k\in\N$, $\lambda\in\N$ and $r\in\Q$, there exist densities $N_{k,\lambda} (\cdot ,r)$ such that 
\[
 \mathfrak{t}_{k,\lambda}  (D,r) = \int_D N_{k,\lambda}  (x,r) \; d\nu (x) . 
\]
Since we are only dealing with a countable set of parameters $k$, $\lambda$ and $r$, by the definition of $\mathfrak{t}_{k,\lambda}$, and by the properties \eqref{eq.prop.tk.1} and \eqref{eq.prop.tk.3}, there exists a set $D_0\in\Xi$ of $\nu$-measure zero such that, for every $k\in\N$, $\lambda\in\N$, $r$, $\hat{r}\in\Q$ and $x\in B\setminus D_0$,  
\begin{align}
 \label{eq.prop.n.1} & -k \leq N_{k,\lambda}  (x,r) \leq k , \\
 \label{eq.prop.n.2} & N_{k,\lambda}  (x,r) \leq N_{k,\lambda+1} (x,r)  
 , \text{ and} \\
 \label{eq.prop.n.3} & | N_{k,\lambda}  (x,r) - N_{k,\lambda} (x,\hat{r}) |\leq \lambda \, |r-\hat{r}| .
\end{align}
From the last inequality it follows that, for every $k\in\N$, $\lambda\in\N$ and every $x\in B\setminus D_0$, the function $N_{k,\lambda} (x,\cdot )$ uniquely extends to a Lipschitz continuous function on $\R$, which we still denote by $N_{k,\lambda}  (x,\cdot )$. In particular, the functions $N_{k,\lambda}$ are Caratheodory functions (more precisely, they are $\mathfrak{K}$-functions), and the associated superposition operators are continuous on $L_0 (\nu )$ by Example \ref{Nem}. Set 
\begin{align*}
 N_{k} (x,r) & := \sup_{\lambda\in\N} N_{k,\lambda} (x,r) .  
\end{align*}
As a pointwise supremum of (Lipschitz) continuous functions, for every $x\in B\setminus D_0$, the function $N_{k} (x,\cdot )$ is lower semicontinuous. By \eqref{eq.prop.n.2}, for every $x\in B\setminus D_0$, $r\in\R$,
\begin{align*}
 & N_k  (x,r) = \lim_{\lambda\to\infty} N_{k,\lambda}  (x,r) .  
\end{align*}
By \cite[Theorem 1.1]{ApZa90}, $N_{k}$ is a so-called Shragin function. 

By Lebesgue's dominated convergence theorem, for every $k\in\N$, every $D\in\Xi$ and every $r\in\R$,
\begin{align*}
 \int_D N_k (x,r) \; d\nu (x) & = \lim_{\lambda\to\infty} \int_D N_{k,\lambda}  (x,r) \; d\nu (x) \\
 & = \lim_{\lambda\to\infty} \mathfrak{t}_{k,\lambda} (D,r) \\
 & =  \mathfrak{t}_k (D,r) \\
 & = \int_D ((-k) \vee \hat{N} (x,r) \wedge k) \; d\nu (x) .  
\end{align*}
As a consequence, there exists a set $D_1\in \Xi$ of $\nu$-measure zero, such that $D_1\supseteq D_0$ and, for every $x\in B\setminus D_1$ and every $r\in\Q$,
\begin{equation} \label{eq.nk}
 N_k  (x,r)  =  
 (-k) \vee \hat{N} (x,r) \wedge k .
\end{equation}
In particular, the superposition operator associated with the Shragin function $N_k$ and the superposition operator associated with the function $(-k) \vee \hat{N} \wedge k$ coincide on the space of rational step functions (step functions taking values in $\Q$). The latter operator, however, uniquely extends to a continuous operator on $L_0 (\nu )$. By \cite[Theorem 1.3]{ApZa90}, the function $N_k$ already is a Caratheodory function. It remains now to let $k\to\infty$, and to note that $N_k (x,\cdot )$ is uniquely determined on $[-k,k]\cap \Q$ by \eqref{eq.nk}, independently of $k\in\N$, in order to obtain a Caratheodory function $N :B\times \R\to\R$ such that the associated superposition operator $T_N$ coincides with $T_{\hat{N}}$ on the space of rational step functions. Hence, by continuity, $T_N = T_{\hat{N}}$ everywhere on $L_0 (\nu )$.  Since $T_N 0 = 0$, $N$ is in fact a $\mathfrak{K}$-function, and we have proved one implication.
  
$(2)\Rightarrow(1)$. Assume that there exists a
$\mathfrak{K}$-function $N :B\times\mathbb{R}\rightarrow\mathbb{R}$
 such that  for any $f\in L_0 (\nu )$  
\begin{align*}
    Tf = N (\cdot ,S_{\Phi}f (\cdot )) ,
\end{align*}
that is, $T = T_N \circ S_\Phi$. Since any superposition operator associated with a Caratheodory function is order continuous, and since $S_\Phi$ is order continuous, then $T:L_{0}(\mu)\to L_{0}(\nu)$ is order continuous.

By Lemma \ref{lem.nemytski}, the superposition operator $T_N$ is an atomic operator subordinate to the identity homomorphism. By construction, the shift operator $S_\Phi$ is an atomic operator subordinate to $\Phi$. It follows easily that the composition $T = T_N \circ S_\Phi$ is an atomic operator subordinate to $\Phi$. 
The proof is finished.
\end{proof}

\section{An extension of positive atomic operators and laterally continuous orthogonally additive operators }

In this section we show that any atomic operator is
laterally-to-order continuous. We also prove that an atomic,
orthogonally additive map defined on a lateral ideal can be extended
to an atomic orthogonally additive operator defined on the whole space.
 
Let $E$, $F$ be  vector lattices. A net
$(x_{\alpha})_{\alpha\in\Lambda}\subseteq E$  is said to be  {\it
laterally  convergent to $x\in E$} if $x_\alpha \overset{\rm
(o)}\longrightarrow x$ and $ (x_{\beta}-x_{\gamma})\bot x_{\gamma}$
for all $\beta$, $\gamma\in\Lambda$, $\beta\geq\gamma$. In this case we
write $x_\alpha \overset{\rm lat}\longrightarrow x$. An orthogonally
additive operator $T:E\to F$ is said to be  {\it laterally-to-order
continuous}, if for every  laterally convergent net
$(x_{\alpha})\subseteq E$  with $x_{\alpha}\overset{\rm
lat}\longrightarrow x$ the net $(Tx_{\alpha})$ order converges to $Tx$.

The following lemma is a variant of Lemma \ref{Oradd}. 

\begin{lemma}\label{cont}
Let $E$ be a vector lattice with the principal projection property,
$F$ be a vector lattice, $\Phi:\mathfrak{B}(E)\to\mathfrak{B}(F)$ be
an order continuous homomorphism of Boolean algebras and $T\in\Phi(E,F)$.
Then $T$ is orthogonally additive, laterally-to-order continuous and disjointness preserving.
\end{lemma}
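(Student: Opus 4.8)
The plan is to build on Lemma~\ref{Oradd}, which already yields that $T$ is orthogonally additive and disjointness preserving under weaker hypotheses; the only genuinely new content here is upgrading the laterally-to-order boundedness of Lemma~\ref{Oradd} to full laterally-to-order \emph{continuity}, and this is precisely where the order continuity of $\Phi$ enters. So the bulk of the work is to take a laterally convergent net $(x_\alpha)$ with $x_\alpha \overset{\rm lat}\longrightarrow x$ and to show $Tx_\alpha \overset{\rm (o)}\longrightarrow Tx$.

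First I would exploit the defining property of lateral convergence: $x_\alpha \overset{\rm (o)}\longrightarrow x$ together with $(x_\beta - x_\gamma)\perp x_\gamma$ for $\beta \geq \gamma$. The disjointness condition means the net is increasing in the fragment order, i.e.\ each $x_\gamma$ is a fragment of $x_\beta$ for $\beta \geq \gamma$, and one checks that each $x_\alpha$ is a fragment of the limit $x$. Writing $\pi_\alpha := \pi_{x_\alpha}$ for the order projection onto the band generated by $x_\alpha$, the fragment relations translate into $(\pi_\alpha)$ being an \emph{increasing} net of order projections with $x_\alpha = \pi_\alpha x$ and with supremum $\pi_x$ (here the principal projection property guarantees all these projections exist, and order convergence $x_\alpha \to x$ forces $\pi_\alpha \uparrow \pi_x$ in $\mathfrak{B}(E)$). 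This is the step I expect to require the most care: verifying that the complements $\pi_x - \pi_\alpha$ (equivalently the projections $\pi_{x - x_\alpha}$) decrease to $0$ as Boolean elements, which is what feeds the hypothesis on $\Phi$.

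Next I would apply the order continuity of $\Phi$. Since $\pi_\alpha \uparrow \pi_x$, order continuity gives $\Phi(\pi_\alpha)\uparrow \Phi(\pi_x)$ in $\mathfrak{B}(F)$, so the complementary projections $\Phi(\pi_x) - \Phi(\pi_\alpha) = \Phi(\pi_x - \pi_\alpha)$ decrease to $0$ in $\mathfrak{B}(F)$. Using the atomic identity $T\pi = \Phi(\pi)T$ I would then compute
\begin{align*}
 |Tx - Tx_\alpha| & = |Tx - T\pi_\alpha x| \\
 & = |T\pi_x x - \Phi(\pi_\alpha) T x| \\
 & = |(\Phi(\pi_x) - \Phi(\pi_\alpha)) Tx| \\
 & = (\Phi(\pi_x) - \Phi(\pi_\alpha)) |Tx| ,
\end{align*}
where the last equality uses that order projections are lattice homomorphisms. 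Since $(\Phi(\pi_x) - \Phi(\pi_\alpha))|Tx| \downarrow 0$ by order continuity of $\Phi$ applied to the decreasing net of complements acting on the fixed positive element $|Tx|$, this bound shows $Tx_\alpha \overset{\rm (o)}\longrightarrow Tx$, which is the desired laterally-to-order continuity.

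The main obstacle, as indicated, is the bookkeeping of the first step: correctly identifying that lateral convergence of $(x_\alpha)$ is equivalent to the monotone convergence $\pi_{x_\alpha}\uparrow \pi_x$ of the associated order projections, and in particular that the order limit $x$ really is the supremum of the fragments $x_\alpha$ so that $x_\alpha = \pi_{x_\alpha}x$. Once that translation is in hand, the order continuity of $\Phi$ does all the analytic work almost automatically, and the computation above closes the argument. I would also remark that this genuinely strengthens Lemma~\ref{Oradd}, whose proof only needed boundedness and used no continuity of $\Phi$ whatsoever.
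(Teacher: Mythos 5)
Your proposal follows essentially the same route as the paper's proof: both convert lateral convergence into monotone convergence of the associated band projections (your complements $\pi_x-\pi_\alpha$ are exactly the paper's projections $\rho_\lambda$ onto $\{x-x_\lambda\}^{\perp\perp}$, which the paper likewise asserts decrease to $\mathbf{0}$ in $\mathfrak{B}(E)$), then invoke order continuity of $\Phi$ to transfer this to $\mathfrak{B}(F)$, and conclude via the atomicity identity $T\pi=\Phi(\pi)T$ together with the fact that band projections commute with the modulus, while delegating orthogonal additivity and disjointness preservation to Lemma~\ref{Oradd}. The only cosmetic difference is that the paper obtains $Tx-Tx_\alpha=T\rho_\alpha x$ through orthogonal additivity, whereas you apply atomicity to both $\pi_x$ and $\pi_\alpha$ and use linearity of the projections on $F$; the two computations are equivalent.
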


\begin{proof}
Take a laterally convergent net $(x_{\lambda})_{\lambda\in\Lambda}$
with $x_\lambda \overset{\rm lat}\longrightarrow x$. Denote by
$\pi_{\lambda}$, $\rho_{\lambda}$ and $\pi$ the order projections onto the
bands $\{x_{\lambda}\}^{\perp\perp}$,
$\{x-x_{\lambda}\}^{\perp\perp}$ and $\{x\}^{\perp\perp}$,
respectively. Since the elements
$x-x_{\lambda}$ and $x_{\lambda}$ are disjoint for any
$\lambda\in\Lambda$ it follows that
\begin{gather*}
T(x)=T(x-x_{\lambda}+x_{\lambda})=T(x-x_{\lambda})+T(x_{\lambda}).
\end{gather*}
Moreover,  the net $(\pi_{\lambda})$ order converges to $\pi$ and
the net $(\rho_{\lambda})$ order converges to
$\mathbf{0}_{\mathfrak{B}(E)}$ in the Boolean algebra
$\mathfrak{B}(E)$. Taking into account that $\Phi$ is an
order continuous homomorphism of Boolean algebras we deduce that the net
$\Phi(\rho_{\lambda})$ converges to $\mathbf{0}_{\mathfrak{B}(F)}$
in the Boolean algebra $\mathfrak{B}(F)$. Hence,
\begin{align*}
|T(x)-T(x_{\lambda})| & =|T\pi(x)-T\pi_{\lambda}(x)| \\
 & = |T(\pi-\pi_{\lambda})(x)| \\
 & = |T\rho_{\lambda}(x)| \\
 & = |\Phi(\rho_{\lambda})Tx| \overset{\rm
(o)}\longrightarrow 0 ,
\end{align*}
and this completes  the proof.
\end{proof}

A subset $D$ of a vector lattice $E$ is said to be  a {\it lateral
ideal} if the following conditions hold:
\begin{enumerate}
\item~if $x\in D$ and $y\in\mathcal{F}_{x}$, then $y\in D$;
\item~if $x$, $y\in D$ and $x\bot y$, then $x+y\in D$.
\end{enumerate}

\begin{example}\label{adm-1}
Let $E$ be a vector lattice. Then any  order ideal in $E$ is a
lateral ideal.
\end{example}

\begin{example}\label{adm-2}
Let $E$ be a vector lattice and $x\in E$. Then $\mathcal{F}_{x}$  is
a lateral ideal.
\end{example}

\begin{example}\label{adm-3}
Let $E$, $F$ be vector lattices and $T:E\to F$ a positive, orthogonally additive operator.
Then the kernel
\[
\text{ker} (T)=\{y\in E:\,T(y)=0\}
\]
is a lateral ideal.
\end{example}

Let  $E$, $F$ be  vector lattices,
$D$ be a lateral ideal in   $E$. A map $T: D\to F$ is said to be
\begin{itemize}
\item \textit{orthogonally additive}, if   $T(x+y)=Tx+Ty$
for every disjoint elements  $x$, $y\in D$;
\item \textit{positive}, if $Tx\geq 0$ for every $x\in D$;
\item \textit{atomic}, if $T\pi=\Phi(\pi)T$ for every  order projection $\pi\in\mathfrak{B}(E)$ .
\end{itemize}

\begin{thm}[{\cite[Theorem~4.4]{PlRa18}}] \label{extension}
Let $E$, $F$ be vector lattices with $F$ Dedekind complete, $D\subseteq
E$ be a lateral ideal, and $T:D\to F$ be a positive, orthogonally
additive operator. Then the operator $\tilde{T} : E\to F$ defined by
\[
\widetilde{T}_{D}x=\sup\{Ty:\,y\in\mathcal{F}_{x}\cap D\} \quad (x\in E) ,
\]
with the interpretation $\sup \emptyset = 0$, is positive, orthogonally additive and laterally-to-order continuous, that is, $\widetilde{T}_{D}\in\mathcal{P}_{+}(E,F)$. Moreover, $\widetilde{T}_{D} x= Tx$ for every $x\in D$.
\end{thm}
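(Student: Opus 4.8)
The plan is to establish, in turn, that the supremum defining $\widetilde{T}_{D}$ exists, that $\widetilde{T}_{D}$ is positive and extends $T$, that it is orthogonally additive, and finally that it is laterally-to-order continuous. The organizing tool throughout is the fact that for fixed $x$ the set $\mathcal{F}_x$ of fragments of $x$ is a Boolean algebra: the map $y\mapsto |y|$ identifies $\mathcal{F}_x$ with the Boolean algebra of components of $|x|$, so $\mathcal{F}_x$ carries lateral operations $\curlyvee$, $\curlywedge$ and a complementation relative to $x$, and its lateral order coincides with $\sqsubseteq$. The first and decisive step is well-definedness. Using the lateral ideal axioms I would show that $\mathcal{F}_x\cap D$ is closed under $\curlyvee$: if $y_1,y_2\in\mathcal{F}_x\cap D$, then $y_i^{+}$ and $-y_i^{-}$ are fragments of $y_i\in D$, hence lie in $D$; the component joins $y_1^{+}\vee y_2^{+}$ and $-(y_1^{-}\vee y_2^{-})$ again lie in $D$ by axiom~(2), so $y_1\curlyvee y_2\in\mathcal{F}_x\cap D$. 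Since $T$ is positive and orthogonally additive, $y\sqsubseteq z$ in $D$ gives $Tz=Ty+T(z-y)\ge Ty$; hence $\{Ty:\,y\in\mathcal{F}_x\cap D\}$ is upward directed. That this family is order bounded above, so that the supremum exists in the Dedekind complete $F$, is precisely where the laterally-to-order boundedness of $T$ enters. I regard securing this bound as the main obstacle, since it genuinely fails for a merely positive orthogonally additive map (take $E=\mathbb{R}^{\mathbb{N}}$, $D$ the finitely supported components of $\mathbf{1}$, and $T(1_A)=|A|$).

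Granting well-definedness, positivity is immediate from $Ty\ge 0$ and $\sup\emptyset=0$, and the identity $\widetilde{T}_{D}x=Tx$ for $x\in D$ holds because $x$ is the top element of $\mathcal{F}_x\cap D$ and $T$ is $\sqsubseteq$-monotone, so $Tx$ is already the maximum of the defining family.

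For orthogonal additivity I fix $x_1\perp x_2$ and write $x=x_1\sqcup x_2$. For the inequality $\widetilde{T}_{D}x\ge\widetilde{T}_{D}x_1+\widetilde{T}_{D}x_2$ I take $y_i\in\mathcal{F}_{x_i}\cap D$, observe $y_1\perp y_2$ so that $y_1+y_2\in\mathcal{F}_x\cap D$ by axiom~(2), and use $T(y_1+y_2)=Ty_1+Ty_2$ before passing to suprema in $y_1$ and $y_2$ separately. For the reverse inequality I exploit the Boolean structure: in $\mathcal{F}_x$ the fragments $x_1,x_2$ are complementary, so each $y\in\mathcal{F}_x\cap D$ splits as $y=(y\curlywedge x_1)\sqcup(y\curlywedge x_2)$, where each piece is $\sqsubseteq y$, hence a fragment of $y$ and so in $D$, and is a fragment of the respective $x_i$; orthogonal additivity of $T$ then gives $Ty=T(y\curlywedge x_1)+T(y\curlywedge x_2)\le\widetilde{T}_{D}x_1+\widetilde{T}_{D}x_2$, and the supremum over $y$ closes this direction.

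Finally, for laterally-to-order continuity take $x_\alpha\overset{\rm lat}{\longrightarrow}x$; then $x_\alpha\sqsubseteq x$ with $x_\alpha$ increasing in $\sqsubseteq$ and $x_\alpha\overset{\rm (o)}{\longrightarrow}x$. Writing $x=x_\alpha\sqcup(x-x_\alpha)$ and invoking the orthogonal additivity just proved, $\widetilde{T}_{D}x=\widetilde{T}_{D}x_\alpha+\widetilde{T}_{D}(x-x_\alpha)$, so the claim reduces to showing $\widetilde{T}_{D}w_\alpha\downarrow 0$ for the laterally decreasing net $w_\alpha:=x-x_\alpha$ with $w_\alpha\overset{\rm (o)}{\longrightarrow}0$. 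Here, for a fixed index $\beta$, any $y\in\mathcal{F}_{w_\beta}\cap D$ and any $\alpha\ge\beta$, I would use the splitting $Ty=T(y\curlywedge w_\alpha)+T(y\curlywedge(w_\beta-w_\alpha))$, bound the first summand by $\widetilde{T}_{D}w_\alpha$, and exploit that $y\curlywedge w_\alpha\overset{\rm (o)}{\longrightarrow}0$ inside the fixed fragment $y$; combined with the order bound from the first step and a Dini-type exhaustion over $\mathcal{F}_{w_\beta}\cap D$, this forces $\inf_\alpha\widetilde{T}_{D}w_\alpha=0$. I expect this continuity step to be the most technically involved, the delicate point being to convert the sup-definition of $\widetilde{T}_{D}$ into a genuine order-convergence statement; the orthogonal additivity and the Boolean calculus on $\mathcal{F}_x$ developed above are exactly what make it manageable.
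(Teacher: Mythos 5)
A preliminary remark: the paper never proves this statement --- it is imported verbatim from \cite[Theorem~4.4]{PlRa18} and used as a black box in the proof of Theorem~\ref{context} --- so there is no internal proof to compare yours with, and I assess your argument on its own terms. The middle portion of your proposal is sound: the upward directedness of $\mathcal{F}_x\cap D$ via the Boolean calculus of fragments, the $\sqsubseteq$-monotonicity of $T$, the identity $\widetilde{T}_{D}x=Tx$ on $D$, and both inequalities in the proof of orthogonal additivity are correct and essentially complete. Your observation about well-definedness is also correct and important: under the hypotheses as stated the supremum need not exist, your counterexample ($E=\mathbb{R}^{\mathbb{N}}$, $D=\{1_A: A\subseteq\mathbb{N}\text{ finite}\}$, $T1_A=|A|$) is valid, and the statement indeed needs the extra hypothesis that $T(\mathcal{F}_x\cap D)$ is order bounded in $F$ for every $x\in E$, which is exactly the repair you propose.

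The genuine gap is your final step, laterally-to-order continuity, and it cannot be closed, because that conclusion is false under the stated hypotheses (even with your boundedness repair). Your ``Dini-type exhaustion'' ultimately needs $T(y\curlywedge w_\alpha)\overset{(o)}{\longrightarrow}0$ whenever the fragments $y\curlywedge w_\alpha$ order converge to $0$; this is a form of order continuity of $T$ which neither positivity nor orthogonal additivity provides. Concretely: let $E=D=\ell^\infty$ (the whole space is a lateral ideal, cf.\ Example~\ref{adm-1}), $F=\mathbb{R}$, fix a free ultrafilter $\mathcal{U}$ on $\mathbb{N}$ and set $Tx:=\lim_{\mathcal{U}}|x_k|$. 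Then $T$ is positive and orthogonally additive (disjointness gives $|x_k+y_k|=|x_k|+|y_k|$ pointwise), it is laterally-to-order bounded, and since $T$ is $\sqsubseteq$-monotone the minimal extension is $\widetilde{T}_{D}=T$ itself. But for $x_n=\mathbf{1}_{\{1,\dots,n\}}$ one has $x_n\overset{\rm lat}{\longrightarrow}\mathbf{1}$ (the sequence is laterally increasing and $|\mathbf{1}-x_n|=\mathbf{1}_{\{n+1,n+2,\dots\}}\downarrow 0$ in $\ell^\infty$), while $Tx_n=0$ for all $n$ and $T\mathbf{1}=1$; so $\widetilde{T}_{D}$ is not laterally-to-order continuous. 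The only reading under which the quoted theorem survives is the weaker one that its own ``that is'' suggests: $\widetilde{T}_{D}\in\mathcal{P}_{+}(E,F)$, i.e.\ lateral-to-order \emph{boundedness}, which is automatic for any positive orthogonally additive operator on $E$ (since $0\leq \widetilde{T}_{D}y\leq \widetilde{T}_{D}x$ for $y\sqsubseteq x$). Genuine lateral continuity requires strictly stronger hypotheses; compare Lemma~\ref{cont}, where it is derived from atomicity with respect to an \emph{order continuous} Boolean homomorphism, not from positivity alone.
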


The operator $\widetilde{T}_{D}\in\mathcal{P}_{+}(E,F)$ is called
the {\it minimal extension} of the positive,
orthogonally additive operator $T:D\to F$.

We recall the following auxiliary result.

\begin{lemma}[{\cite[Lemma~2]{PlPo16}}] \label{pr:partorder}
Let $E$ be a vector lattice. Then the relation $\sqsubseteq$ is a partial
order on $E$. Moreover for every $x\in E$ the set $\mathcal{F}_{x}$,
partially ordered by $\sqsubseteq$, is a Boolean algebra with the
least element $0$, maximal element $x$, and the Boolean operations
\begin{align*}
z\cup y & := (z^{+}\vee y^{+})-(z^{-}\vee y^{-}) , \\
z\cap y & := (z^{+}\wedge y^{+})-(z^{-}\wedge y^{-}), \\
\overline{z} & := x-z \quad (y,\, z\in \mathcal{F}_{x}).
\end{align*}
\end{lemma}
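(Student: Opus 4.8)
The plan is to split the statement into two parts: that $\sqsubseteq$ is a partial order on all of $E$, and that each $(\mathcal{F}_{x},\sqsubseteq)$ is a Boolean algebra, reducing the latter to the classical fact that the components of a positive element form a Boolean algebra. First I would verify the three order axioms. Reflexivity is immediate from $x\perp 0$. For antisymmetry, if $y\sqsubseteq x$ and $x\sqsubseteq y$, then $y\perp(x-y)$ and $x\perp(y-x)$; since disjointness from a fixed element is unaffected by sign and the elements disjoint from a given one form a band (hence a linear subspace), I would deduce $(x-y)\perp(x-y)$, whence $|x-y|=0$ and $x=y$. For transitivity, suppose $z\sqsubseteq y$ and $y\sqsubseteq x$. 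From $z\perp(y-z)$ one gets $|y|=|z|+|y-z|$, so $|z|\le|y|$; combined with $|y|\wedge|x-y|=0$ this gives $|z|\wedge|x-y|=0$, i.e. $z\perp(x-y)$. Together with $z\perp(y-z)$ and linearity of disjointness this yields $z\perp\big((x-y)+(y-z)\big)$, that is $z\perp(x-z)$, so $z\sqsubseteq x$.

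Next I would reduce the Boolean algebra assertion to the positive case via the disjoint decomposition $x=x^{+}-x^{-}$. Writing $\mathcal{C}(u)$ for the set of fragments (components) of a positive element $u$, I claim that $z\mapsto(z^{+},z^{-})$ is an order isomorphism of $(\mathcal{F}_{x},\sqsubseteq)$ onto $\mathcal{C}(x^{+})\times\mathcal{C}(x^{-})$ with the product order. The content of this is that $z\sqsubseteq x$ if and only if $z^{+}\sqsubseteq x^{+}$ and $z^{-}\sqsubseteq x^{-}$. For the direction ``$\Leftarrow$'', given components $u\sqsubseteq x^{+}$ and $v\sqsubseteq x^{-}$ I would note that $u\perp v$ (they lie in the disjoint bands of $x^{+}$ and $x^{-}$), so $z:=u-v$ has $z^{+}=u$ and $z^{-}=v$; then, using $(a_{1}+a_{2})\wedge(b_{1}+b_{2})=(a_{1}\wedge b_{1})+(a_{2}\wedge b_{2})$ for positive $a_{i},b_{i}$ in the respective disjoint bands, one gets $|z|\wedge|x-z|=\big(u\wedge(x^{+}-u)\big)+\big(v\wedge(x^{-}-v)\big)=0$, i.e. $z\sqsubseteq x$.

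The reverse direction ``$\Rightarrow$'' is where the real work lies, and I expect it to be the main obstacle. Starting from $z\perp(x-z)$, I would first observe $z^{+}\perp(x-z)$ and $z^{-}\perp(x-z)$, since $0\le z^{\pm}\le|z|$. Writing $x=z^{+}+w$ with $w:=-z^{-}+(x-z)$, the element $w$ is disjoint from $z^{+}$, so $z^{+},w^{+},w^{-}$ are pairwise disjoint positive elements and $x^{+}=(z^{+}+w)^{+}=z^{+}+w^{+}$; in particular $z^{+}\le x^{+}$ and $x^{+}-z^{+}=w^{+}\perp z^{+}$, which is exactly $z^{+}\sqsubseteq x^{+}$. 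Applying the same argument to $-z\sqsubseteq -x$ gives $z^{-}\sqsubseteq x^{-}$. To upgrade the bijection to an order isomorphism I would run the same decomposition with $y\in\mathcal{F}_{x}$ in place of $x$, together with the elementary fact that on the components of a fixed positive element the relation $\sqsubseteq$ coincides with the induced order $\le$.

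Finally, I would invoke the classical result that the components $\mathcal{C}(u)$ of a positive element $u$ form a Boolean algebra in which the order is the restriction of $\le$, infimum and supremum are the lattice operations $\wedge$ and $\vee$, the least and greatest elements are $0$ and $u$, and the complement of $v$ is $u-v$ (see \cite{AlBu06,AbAl02}). Since a product of Boolean algebras is again a Boolean algebra with componentwise operations, transporting this structure along the order isomorphism $z\mapsto(z^{+},z^{-})$ shows that $(\mathcal{F}_{x},\sqsubseteq)$ is a Boolean algebra with least element $0$ and greatest element $x$; reading the operations off componentwise gives $z\cup y=(z^{+}\vee y^{+})-(z^{-}\vee y^{-})$, $z\cap y=(z^{+}\wedge y^{+})-(z^{-}\wedge y^{-})$, and $\overline{z}=x-z$, exactly as claimed.
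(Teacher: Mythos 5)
Your proof is correct. Note, however, that the paper contains no proof of this statement to compare against: Lemma~\ref{pr:partorder} is imported by citation from \cite{PlPo16}, so your argument is necessarily a reconstruction rather than a parallel of anything in the text. What you propose is a clean, self-contained route, and it is precisely the structure that the stated formulas suggest: the order axioms are checked directly (your antisymmetry and transitivity arguments, using that $\{z\}^{\perp}$ is a linear subspace and that $z\perp(y-z)$ forces $|z|\leq|y|$, are sound), and the Boolean-algebra claim is reduced to the classical fact about components of a positive element via the order isomorphism $z\mapsto(z^{+},z^{-})$ of $(\mathcal{F}_{x},\sqsubseteq)$ onto $\mathcal{C}(x^{+})\times\mathcal{C}(x^{-})$. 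The two pivotal steps both hold: the equivalence $z\sqsubseteq x\Leftrightarrow(z^{+}\sqsubseteq x^{+}$ and $z^{-}\sqsubseteq x^{-})$, where the forward direction correctly exploits $(z^{+}+w)^{+}=z^{+}+w^{+}$ for $w\perp z^{+}$, and the disjoint-band identity $(a_{1}+a_{2})\wedge(b_{1}+b_{2})=a_{1}\wedge b_{1}+a_{2}\wedge b_{2}$, which follows from $(a+b)\wedge c=a\wedge c+b\wedge c$ for disjoint positive $a,b$. Two points you use implicitly deserve one line each in a polished write-up: first, fragments of a positive element are positive (if $a\perp b$ and $a+b\geq 0$, then $a^{-}+b^{-}=(a+b)^{-}=0$, so $a,b\geq 0$), which justifies writing $x^{+}-u\geq 0$ for $u\in\mathcal{C}(x^{+})$; second, that $\mathcal{C}(u)$ is closed under $\vee$ and $\wedge$ and is distributive, so that it really is a Boolean algebra with complement $u-v$ --- this is elementary (e.g.\ $(v\vee w)\wedge(u-v)\wedge(u-w)=0$ by distributivity and positivity) but is the substance of the ``classical result'' you invoke, and should be pinned to a precise reference in \cite{AlBu06} or \cite{AbAl02} or proved in two lines.
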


The next theorem is the main result of this section. It shows that
the minimal extension of an atomic orthogonally additive map is an
atomic operator as well.

\begin{thm}\label{context}
Let $E$ be vector lattice with the principal projection property and
$F$ be a Dedekind complete vector lattice, $D$ be a lateral ideal in
$E$, and $T:D\to F$ be an atomic, positive, orthogonally additive map.
Then the minimal extension $\widetilde{T}_{D}$ of $T$ is an atomic,
 positive, orthogonally additive operator from $E$ to $F$ as well.
\end{thm}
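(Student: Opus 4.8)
The plan is to verify that the minimal extension $\widetilde{T}_{D}$ is atomic subordinate to the same $\Phi$, i.e.\ that $\widetilde{T}_{D}\pi=\Phi(\pi)\widetilde{T}_{D}$ for every $\pi\in\mathfrak{B}(E)$; equivalently, fixing $\pi\in\mathfrak{B}(E)$ and $x\in E$, that
\[
\widetilde{T}_{D}(\pi x)=\Phi(\pi)\widetilde{T}_{D}x .
\]
By Theorem~\ref{extension} the operator $\widetilde{T}_{D}$ is already positive, orthogonally additive and laterally-to-order continuous, so atomicity is the only property left to check, and I would establish the displayed identity by proving the two inequalities separately.

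First I would record two structural facts about the relevant index sets. (a) The set $\mathcal{F}_{x}\cap D$ is an ideal of the Boolean algebra $\mathcal{F}_{x}$ of Lemma~\ref{pr:partorder}: it is closed under fragments by the first lateral-ideal axiom, and for $z_{1},z_{2}\in\mathcal{F}_{x}\cap D$ the Boolean join satisfies $z_{1}\cup z_{2}=z_{1}\sqcup(z_{2}\cap\overline{z_{1}})$, a disjoint sum of $z_{1}\in D$ and of the fragment $z_{2}\cap\overline{z_{1}}\sqsubseteq z_{2}$, which lies in $D$; hence $z_{1}\cup z_{2}\in D$ by the second axiom. Since $T$ is positive and orthogonally additive, $T(z_{1}\cup z_{2})\geq Tz_{1}\vee Tz_{2}$, so $\{Tz:z\in\mathcal{F}_{x}\cap D\}$ is upward directed with supremum $\widetilde{T}_{D}x$. (b) For $z\in\mathcal{F}_{x}\cap D$ the element $\pi z$ is a fragment of $z$ (as $\pi z\perp(Id-\pi)z$), hence $\pi z\in D$; moreover $\pi$ preserves disjointness, so $\pi z\perp(\pi x-\pi z)$ and thus $\pi z\in\mathcal{F}_{\pi x}\cap D$. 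Conversely, if $y\in\mathcal{F}_{\pi x}\cap D$ then $y\sqsubseteq\pi x\sqsubseteq x$, so $y\in\mathcal{F}_{x}\cap D$, and since $y$ lies in the range band of $\pi$ we have $\pi y=y$.

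With these in hand the two inequalities are short. For ``$\leq$'', I take $y\in\mathcal{F}_{\pi x}\cap D$; then $y=\pi y\in\mathcal{F}_{x}\cap D$, so by atomicity of $T$ on $D$ and positivity of the projection $\Phi(\pi)$ one gets $Ty=T(\pi y)=\Phi(\pi)Ty\leq\Phi(\pi)\widetilde{T}_{D}x$, and taking the supremum over such $y$ yields $\widetilde{T}_{D}(\pi x)\leq\Phi(\pi)\widetilde{T}_{D}x$. For ``$\geq$'', I use that $\Phi(\pi)\in\mathfrak{B}(F)$ is an order projection, hence order continuous on the Dedekind complete lattice $F$, so it commutes with the directed supremum from (a):
\[
\Phi(\pi)\widetilde{T}_{D}x=\sup_{z\in\mathcal{F}_{x}\cap D}\Phi(\pi)Tz=\sup_{z\in\mathcal{F}_{x}\cap D}T(\pi z)\leq\widetilde{T}_{D}(\pi x),
\]
where the last step holds because each $\pi z\in\mathcal{F}_{\pi x}\cap D$ by (b). Combining the two inequalities gives the identity, and hence $\widetilde{T}_{D}\in\Phi(E,F)$.

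The main obstacle, and the only place needing more than bookkeeping, is the ``$\geq$'' direction: it relies on interchanging the Boolean-homomorphic projection $\Phi(\pi)$ with the defining supremum of the minimal extension. This is precisely why fact (a)---that the relevant $T$-image is upward directed---must be secured first, after which order continuity of the band projection $\Phi(\pi)$ on the Dedekind complete space $F$ does the rest. Everything else reduces to the two lateral-ideal axioms together with the elementary compatibility of order projections with fragments and with disjointness recorded in (b).
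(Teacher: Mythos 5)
Your proposal is correct and takes essentially the same route as the paper's proof: both rest on (i) the upward directedness of $\{Tz:\,z\in\mathcal{F}_{x}\cap D\}$, obtained from the Boolean algebra structure of $\mathcal{F}_{x}$ in Lemma~\ref{pr:partorder} together with the two lateral-ideal axioms, (ii) the atomicity identity $T\pi z=\Phi(\pi)Tz$ on $D$, and (iii) order continuity of the band projection $\Phi(\pi)$ to pass it through the directed supremum. Your fact (b), identifying $\mathcal{F}_{\pi x}\cap D$ with $\pi\left(\mathcal{F}_{x}\cap D\right)$, is simply a more explicit justification of the step the paper uses implicitly when it writes $\widetilde{T}_{D}\pi(x)=\sup\{T\pi(y):\,y\in\mathcal{F}_{x}\cap D\}$.
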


\begin{proof}
By  Theorem~\ref{extension},  the operator $\widetilde{T}_{D}$ is
well defined and $\widetilde{T}_{D}\in\mathcal{OA}_{+}(E,F)$. We
show that $\widetilde{T}_{D}$ is an atomic operator. Take
an order projection $\pi\in\mathfrak{B}(E)$ and $x\in E$. First we
show that $\mathcal{D}:=\{Ty:\,y\in\mathcal{F}_x\cap D\}$ is an upward
directed set. Indeed, take $y$, $z\in\mathcal{D}$. By
Lemma~\ref{pr:partorder} there exists $u\in\mathcal{F}_{x}$ such
that $u=z\cap y$. Then $y':=y-u\in\mathcal{F}_{x}\cap D$ and
$y'\perp z$. Hence $v:=y'+z\in\mathcal{F}_{x}\cap D$, $y\sqsubseteq
v$ and $z\sqsubseteq v$.  Thus, for every $y$, $z\in\mathcal{F}_{x}\cap D$
there exists $v$. Taking into account  that the relation
$z\sqsubseteq x$ implies that $Tz\leq Tx$ we deduce that for every
$y$, $z\in\mathcal{F}_{x}\cap D$ there exists $v\in\mathcal{F}_{x}\cap
D$ such that $Ty\leq Tv$ and $Tz\leq Tv$. Now,
\begin{align*}
\widetilde{T}_{D}\pi(x) & = \sup\{T\pi(y):\,y\in\mathcal{F}_{x}\cap
D\} \\
 & = \sup\{\Phi(\pi)T(y):\,y\in\mathcal{F}_{x}\cap D\}.
\end{align*}
Taking into account that the set $\{T(y):\,y\in\mathcal{F}_{x}\cap D\}$
is upward directed and that $\Phi(\pi)$ is an order continuous positive
linear operator for any order projection $\pi\in\mathfrak{B}(E)$ we get
\begin{align*}
\widetilde{T}_{D}\pi(x) & = \sup\{\Phi(\pi)T(y):\,\,y\in\mathcal{F}_{x}\cap
D\} \\
 & = \olim\limits_{\lambda}\{\Phi(\pi)T(y_{\lambda}):\,y_{\lambda}\in\mathcal{F}_{x}\cap
D\} \\
 & = \Phi(\pi)\Big(\olim\limits_{\lambda}\{T(y_{\lambda}):\,y_{\lambda}\in\mathcal{F}_{x}\cap D\}\Big) \\
 & = \Phi(\pi)\sup\{T(y):\,y\in\mathcal{F}_{x}\cap D\} \\
 & = \Phi(\pi)\widetilde{T}_{D}(x) ,
\end{align*}
and the proof is finished.
\end{proof}


\providecommand{\bysame}{\leavevmode\hbox to3em{\hrulefill}\thinspace}

\bibliographystyle{acm}

  \def\ocirc#1{\ifmmode\setbox0=\hbox{$#1$}\dimen0=\ht0 \advance\dimen0
  by1pt\rlap{\hbox to\wd0{\hss\raise\dimen0
  \hbox{\hskip.2em$\scriptscriptstyle\circ$}\hss}}#1\else {\accent"17 #1}\fi}
  \def\cprime{$'$} \def\cprime{$'$} \def\cprime{$'$}

\end{document}